\numberwithin{equation}{section}
\newtheorem{thm}{Theorem}[section]
\newtheorem{lemma}[thm]{Lemma}
\newtheorem{prop}[thm]{Proposition}
\newtheorem{cor}[thm]{Corollary}
{\theorembodyfont{\rmfamily}

\newtheorem{rmk}[thm]{Remark}
}
\newcommand{\qed}{\hfill \mbox{\raggedright \rule{.07in}{.1in}}}
\newenvironment{proof}{\vspace{1ex}\noindent{\bf
Proof}\hspace{0.5em}}{\hfill\qed\vspace{1ex}}
\newenvironment{pfof}[1]{\vspace{1ex}\noindent{\bf Proof of
#1}\hspace{0.5em}}{\hfill\qed\vspace{1ex}}
\newcommand{\R}{{\mathbb R}}
 \newcommand{\Q}{{\mathbb Q}}
\newcommand{\C}{{\mathbb C}}
 \newcommand{\Z}{{\mathbb Z}}
 \newcommand{\E}{{\mathbb E}}
\newcommand{\eps}{{\epsilon}}
 \newcommand{\diam}{\operatorname{diam}}
\renewcommand{\Re}{\operatorname{Re}}
\renewcommand{\Im}{\operatorname{Im}}
\title{Broadband Nature of Power Spectra for Intermittent Maps with Summable and Nonsummable Decay of Correlations}
\author{
Georg A. Gottwald\thanks{School of Mathematics and Statistics, University of Sydney, Sydney 2006 NSW, Australia}
\and
Ian Melbourne\thanks{Mathematics Institute, University of Warwick, Coventry, CV4 7AL, UK}
}
\date{31 October 2015}
\begin{document}

\maketitle

 \begin{abstract}
 We present results on the broadband nature of the power spectrum $S(\omega)$, $\omega\in(0,2\pi)$, for a large class of nonuniformly expanding maps with summable and nonsummable decay of correlations.  In particular, we consider a class of intermittent maps $f:[0,1]\to[0,1]$ with $f(x)\approx x^{1+\gamma}$ for $x\approx 0$, where $\gamma\in(0,1)$.
 Such maps have summable decay of correlations when $\gamma\in(0,\frac12)$, and $S(\omega)$ extends to a continuous function on $[0,2\pi]$ by the classical Wiener-Khintchine Theorem.  We show that $S(\omega)$ is typically bounded away from zero for H\"older observables.

 Moreover, in the nonsummable case $\gamma\in[\frac12,1)$, we show that $S(\omega)$ is defined almost everywhere with a continuous extension $\tilde S(\omega)$ defined on $(0,2\pi)$, and $\tilde S(\omega)$ is typically nonvanishing.
 \end{abstract}

\section{Introduction}

Let $f:X\to X$ be a measure preserving transformation of a probability space
$(X,\mu)$ and let $v:X\to\R$ be an $L^2$ observable.
The power spectrum $S:[0,2\pi]\to\R$ is given by
\[
S(\omega)=\lim_{n\to\infty}\frac1n\int_X\Bigl|\sum_{j=0}^{n-1} e^{ij\omega}v\circ f^j\Bigr|^2\,d\mu.
\]
By the Wiener-Khintchine Theorem~\cite{Kampen}, 
$S(\omega)=\sum_{k=-\infty}^\infty e^{ik\omega}\rho(k)$ where
$\rho(k)=\int_X v\circ f^k\,v\,d\mu-\Bigl(\int_X v\,d\mu)^2$ is the autocorrelation function of $v$.
In particular, the power spectrum is analytic if and only if the autocorrelations decay exponentially.
More generally, the power spectrum is well-defined and continuous provided the autocorrelations are summable.

The power spectrum is often used by experimentalists to distinguish periodic and quasiperiodic dynamics (discrete power spectrum with peaks at the harmonics and subharmonics) and chaotic dynamics (broadband power spectra).  
See for example~\cite{GollubSwinney75}. 
In the atmospheric and oceanic sciences and in climate science, power spectra have been widely used to detect variability in particular frequency bands (see, for example, \cite{GhilEtAl02,Dijkstra}). 
Spectral analysis was successful in detecting dominant time scales in teleconnection patterns, revealing intraseasonal variability in time series of the global atmospheric angular momentum \cite{DickeyEtAl91}, interannual variability in the El Ni\~no/Southern Oscillation system \cite{Chen82,JiangEtAl95}, and the Atlantic Multidecadal Variability \cite{DelworthMann00}. 
On millenial temporal scales the power spectrum was instrumental in unraveling dominant cycles in paleoclimatic records \cite{Wunsch00,Wunsch03}. 

Despite this widespread applicability across these disparate temporal scales, there are surprisingly few rigorous results on the nature of power spectra of complex systems. 
The (quasi)periodic case with its peaks at discrete frequencies is well understood. 
The nature of power spectra for chaotic systems was first treated in \cite{Ruelle86} in the case of uniformly hyperbolic (Axiom~A) systems. 
In our previous paper~\cite{MG08}, we considered in more detail the broadband nature of power spectra for chaotic dynamical systems and showed that for certain classes of dynamical systems $f$ and observables~$v$, the power spectrum is bounded away from zero.

The main results in~\cite{MG08} are for nonuniformly expanding/hyperbolic dynamical systems with exponential decay of correlations.  These results are summarised below in Subsection~\ref{sec-MG}.  The current paper is concerned with systems possessing subexponential --- even nonsummable --- decay of correlations.
A prototypical example is the class of Pomeau-Manneville intermittent maps~\cite{PomeauManneville80}, specifically the class
considered in~\cite{LiveraniSaussolVaienti99}.  These are maps $f:[0,1]\to[0,1]$ given by
\begin{align} \label{eq-LSV}
f(x)=\begin{cases} x(1+2^\gamma x^\gamma), & x\in[0,\frac12) \\
2x-1, & x\in[\frac12,1]
\end{cases},
\end{align}
where $\gamma>0$ is a parameter. For $\gamma\in(0,1)$, there is a unique absolutely continuous invariant probability measure $\mu$, and autocorrelations decay at the rate $O(1/n^\beta)$ for H\"older observables 
where $\beta=\gamma^{-1}-1$.
In particular, if $\gamma\in(0,\frac12)$, then the 
autocorrelation function is summable and the Wiener-Khintchine Theorem assures that the power spectrum is well-defined and continuous.
We show that the power spectrum is typically bounded away from zero.
Moreover, we show that the same result holds for all $\gamma\in(0,1)$ provided the H\"older
exponent\footnote{Recall that if $(X,d)$ is a metric space and $\eta\in(0,1]$, then $v:X\to\R$ is $C^\eta$
(H\"older with exponent $\eta$) if $|v|_\eta=\sup_{x\neq y}|v(x)-v(y)|/d(x,y)^\eta<\infty$.}
of $v$ is sufficiently large.  More precisely, we prove:

\begin{thm} \label{thm-LSV}  Suppose that $f:[0,1]\to[0,1]$ is of the type~\eqref{eq-LSV} where $\gamma\in(0,1)$.  
Suppose that $v:[0,1]\to\R$ is $C^\eta$ where $\eta>0$.

If $\gamma\in(0,\frac12)$, 
then the power spectrum is continuous on $(0,2\pi)$ with a continuous extension to $[0,2\pi]$, and is
typically\footnote{Throughout this paper ``typically'' means lying outside a closed subspace of infinite codimension within the Banach space of H\"older observables of a given exponent.  Thus Theorem~\ref{thm-LSV} fails only for infinitely degenerate observables.}
bounded away from zero.

If $\gamma\in[\frac12,1)$, and $\eta>(3\gamma-1)/2$, then the power spectrum is defined almost everywhere with a continuous extension to $(0,2\pi)$.  Typically this extension is nonvanishing.
\end{thm}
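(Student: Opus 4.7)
The plan is to adapt the strategy from the authors' earlier paper \cite{MG08}, which treats exponentially mixing systems, by combining it with Sarig--Gou\"ezel operator renewal theory for intermittent maps. First I would pass to the first-return map $F = f^\tau : Y \to Y$ on $Y = (1/2, 1]$, which is uniformly expanding with a spectral gap on a Banach space of piecewise H\"older functions. Let $L$ denote the transfer operator for $f$, and $T_n \phi = L^n(1_{\{\tau = n\}} \phi)$ the associated first-return transfer operators, whose norms satisfy $\|T_n\| = O(n^{-1-\beta})$ with $\beta = \gamma^{-1} - 1$. Sarig's renewal equation then expresses the generating function $\hat L(z) = \sum_{n \geq 0} z^n L^n$ (acting on the mean-zero subspace) via $(I - \hat T(z))^{-1}$, where $\hat T(z) = \sum_{n \geq 1} z^n T_n$.

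For $\gamma \in (0, 1/2)$, continuity of $S$ on $[0, 2\pi]$ is immediate from Wiener--Khintchine since $\rho(k)$ is absolutely summable. For $\gamma \in [1/2, 1)$, I would define $\tilde S(\omega)$ on $(0, 2\pi)$ via the boundary values of $\hat L(z)$ at $z = e^{i\omega}$, obtained from $(I - \hat T(e^{i\omega}))^{-1}$. Continuity of the operator-valued map $\omega \mapsto \hat T(e^{i\omega})$ on $(0, 2\pi)$ follows by Abel summation from $\|T_n\| = O(n^{-1-\beta})$, and the hypothesis $\eta > (3\gamma - 1)/2$ ensures that the pairings with $v$ appearing in the boundary terms of the renewal formula inherit enough modulus of continuity in $\omega$ to yield a continuous $\tilde S$.

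For the nonvanishing conclusion, suppose $\tilde S(\omega_0) = 0$ for some $\omega_0 \in (0, 2\pi)$. The renewal representation then forces $v$ to satisfy a twisted coboundary equation $v = V_\infty - e^{i\omega_0} V_\infty \circ f$ with $V_\infty \in L^2(\mu)$, so $v$ lies in a closed subspace $W_{\omega_0} \subset C^\eta$. Following the argument of \cite{MG08}, one shows that $\bigcup_{\omega_0 \in (0, 2\pi)} W_{\omega_0}$ is contained in a single closed subspace of $C^\eta$ of infinite codimension, by exhibiting infinitely many linearly independent $C^\eta$ observables --- for instance, bump functions supported near distinct non-periodic points of $f$ --- that cannot be twisted coboundaries at any frequency.

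The main obstacle is the nonsummable case: obtaining uniform control on $(I - \hat T(e^{i\omega}))^{-1}$ on compact subsets of $(0, 2\pi)$, together with continuous $\omega$-dependence, right up to the point where the renewal inversion becomes singular as $\omega \to 0$. The precise threshold $\eta > (3\gamma - 1)/2$ emerges from balancing the polynomial tail $n^{-1-\beta}$ of the return operators against the H\"older modulus of $v$ needed to tame both the renewal boundary pairings and the singular behavior of the resolvent near $\omega = 0$; rigorously managing this tradeoff is the technical heart of the proof.
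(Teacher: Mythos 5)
Your renewal-theoretic route diverges from the paper, and as it stands it has a genuine gap at the central point of the nonsummable case: you never prove that the limit defining $S(\omega)=\lim_n n^{-1}\int|\sum_{j<n}e^{ij\omega}v\circ f^j|^2\,d\mu$ exists for (almost every) $\omega\in(0,2\pi)$. Defining a candidate $\tilde S(\omega)$ from boundary values of $(I-\hat T(e^{i\omega}))^{-1}$ produces a continuous function, but when $\gamma\in[\frac12,1)$ the autocorrelations are not summable, Wiener--Khintchine is unavailable, and the identification of the Ces\`aro-type limit with the resolvent boundary value is precisely the hard step, not a formality. In the paper this is the content of Section~3.2: for irrational angles one passes to the circle extension $f_\omega(x,\varphi)=(fx,\varphi+\omega)$, which is ergodic, applies the weak invariance principle of~\cite{GMsub} to get distributional convergence of $n^{-1}|\sum_{j<n}u\circ f_\omega^j|^2$, and then upgrades to convergence of second moments via a martingale--coboundary decomposition on the Young tower and Burkholder's inequality. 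It is exactly here that the threshold $\eta>(3\gamma-1)/2$ is used, through the quantitative requirements $r\in L^a(Y)$ and $V^*_\omega\in L^{bp}(Y)$ with $1/a+1/b=1$, $p>2$ (Proposition~2.4 and Theorem~2.2(b)); your statement that the threshold ``emerges from balancing'' the tail of the return operators against the H\"older modulus is not an argument and does not substitute for these moment estimates. Relatedly, renewal operator asymptotics control observables supported (or induced) on $Y$, and transferring them to a general H\"older $v$ on $[0,1]$ with nonsummable tails requires exactly the kind of induced-observable estimates ($V_\omega$, $V^*_\omega$) you have not supplied.

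The typicality argument is also incomplete. Your claim that $\tilde S(\omega_0)=0$ forces an $L^2$ twisted coboundary equation for $v$ on the original map is unjustified in the nonsummable setting; the paper instead works on the induced uniformly expanding system, where $S^Y(\omega)=|\tilde V_\omega|_2^2$ with $\chi_\omega=\sum_j P_\omega^jV_\omega$ absolutely summable in the Lipschitz norm (Propositions~3.4 and~3.5), so that $S^Y(\omega_0)=0$ gives the coboundary identity $V_{\omega_0}=e^{i\omega_0 r}\chi_{\omega_0}\circ F-\chi_{\omega_0}$ with a \emph{bounded} transfer function that can be evaluated along orbits. Your proposed mechanism for infinite codimension (bump functions near non-periodic points that ``cannot be twisted coboundaries at any frequency'') does not work as stated: the typicality must be a condition on $v$ holding simultaneously for the uncountable family of frequencies, and one needs a device to reduce this to countably many conditions. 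The paper does this by attaching to each periodic point an \emph{analytic} function $g(\omega)$ built from an orbit shadowing the periodic point, which must vanish at any zero of $S^Y$; two such functions generically have no common zeros, and infinitely many periodic points give infinitely many independent obstructions. Finally, even in the summable case $\gamma\in(0,\frac12)$ you do not address the endpoints $\omega=0,2\pi$, where boundedness away from zero requires the Green--Kubo identification $S_0(0)=\sigma^2$ and the fact that $\sigma^2>0$ typically; nonvanishing on the open interval alone does not give a uniform lower bound on $[0,2\pi]$.
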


\begin{rmk} \label{rmk-LSV}   When $\gamma\in(0,\frac12)$, 
the result is a special case of a more general result, Theorem~\ref{thm-NUE}, stated below. 
The only part of the result that is new for $\gamma\in(0,\frac12)$ is that the power spectrum is typically bounded below.

The case $\gamma\in[\frac12,1)$ depends more strongly on the details of the system, and seems to be a new result in its entirety.
\end{rmk}

\subsection{The results in~\cite{MG08} for systems with exponential decay}
\label{sec-MG}

The simplest case considered in~\cite{MG08} is when $f$ is either a (noninvertible) uniformly expanding map
or a uniformly hyperbolic (Axiom~A) diffeomorphism and
$\Lambda$ is a locally maximal transitive subset of $X$.
Suppose first that $\Lambda$ is mixing.
Then $f:\Lambda\to\Lambda$ has exponential decay of correlations for H\"older observables; in particular, the autocorrelation function of $v$ decays exponentially provided $v$ is H\"older.
In this situation, we showed~\cite[Theorem~1.3]{MG08} that the power spectrum 
is bounded away from zero for typical H\"older observables.

Still in the uniformly expanding/hyperbolic setting, it was shown in~\cite{MG08} that the mixing condition is unnecessary.
There is an integer $q\ge1$ such that (i) $\Lambda$ is a disjoint union
$\Lambda=\Lambda_1\cup\cdots\cup \Lambda_q$, (ii) $f(\Lambda_i)= \Lambda_{i+1}$ (computing subscripts $\bmod\, q$), and (iii) $f^q:\Lambda_i\to\Lambda_i$ is mixing for each $i$.
Moreover, $f^q:\Lambda_i\to\Lambda_i$ has exponential decay of correlations for H\"older observables.  By~\cite{MG08}, the power spectrum is analytic with removable singularities at $2\pi j/q$, $j=0,\dots,q$, and typically bounded away from zero.

Large classes of nonuniformly expanding/hyperbolic systems can be treated in the same manner, namely those modelled by the tower construction of Young~\cite{Young98}.
These systems include the logistic family, H\'enon-like attractors, and planar periodic dispersing billiards.  In such cases, the power spectrum is again analytic (up to removable singularities) and typically bounded away from zero.

\subsection{Systems with subexponential but summable decay}

The current paper is concerned with the case when exponential decay of correlations fails.  Young~\cite{Young99} considers nonuniformly expanding maps with subexponential decay of correlations.   
The precise definition of nonuniformly expanding map is given in Section~\ref{sec-NUE}.
Although the power spectrum is no longer analytic, we may still discuss its boundedness properties.

If the decay is summable, then the power spectrum is continuous and extends to a continuous (and hence bounded) function on $[0,2\pi]$.
We prove:

\begin{thm}  \label{thm-NUE}
Let $f$ be a nonuniformly expanding map with
polynomial decay of correlations at a rate $O(1/n^\beta)$ where $\beta>1$.
Then the power spectrum is bounded away from zero for typical H\"older observables.
\end{thm}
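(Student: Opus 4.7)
Since $\beta>1$ the autocorrelations are absolutely summable, so by Wiener--Khintchine $S(\omega)=\sum_{k\in\Z}e^{ik\omega}\rho(k)$ extends continuously and nonnegatively to $[0,2\pi]$. The task is to exhibit a closed subspace $\mathcal V\subset C^\eta$ of infinite codimension such that $\inf_{\omega\in[0,2\pi]}S(\omega)>0$ for every $v\in C^\eta\setminus\mathcal V$. My plan is to follow the template of~\cite{MG08} for the mixing exponentially decaying case, but to replace the spectral-gap/analyticity input there by norm-continuity of the twisted resolvent. Concretely, model $f$ by a Young tower with polynomial tails and let $L$ be the transfer operator on a suitable Banach space $\mathcal B$ of regular densities on the tower. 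The hypothesis $\beta>1$ combined with standard Young-tower estimates yields $\sum_k\|L^kv\|_{\mathcal B}<\infty$ for mean-zero $v\in\mathcal B$, so the twisted resolvent $R(\omega)=\sum_{k\ge0}e^{ik\omega}L^k$ is well defined and norm-continuous in $\omega\in[0,2\pi]$, producing
\[
S(\omega)=2\,\Re\,\langle v,R(\omega)v\rangle_\mu-\rho(0).
\]

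To locate the zeros of $S$ I would use the Gordin--Lifshits martingale--coboundary decomposition $v=m+g-g\circ f$, where $g=\sum_{k\ge1}L^kv\in\mathcal B$ and $m=v-g+g\circ f$ satisfies $Lm=0$. Telescoping the twisted ergodic sum gives the identity
\[
S(\omega)=\|m\|_{L^2}^2+|1-e^{-i\omega}|^2\,S_g(\omega)+2\,\Re\bigl[(1-e^{-i\omega})\,C_{m,g}(\omega)\bigr],
\]
with $S_g$ the spectral density of $g$ and $C_{m,g}(\omega)=\sum_{\ell\ge0}e^{i\ell\omega}\langle m,L^\ell g\rangle_\mu$. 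The Cauchy--Schwarz inequality $|C_{m,g}(\omega)|^2\le\|m\|_{L^2}^2\,S_g(\omega)$ together with completing the square in the complex variable $c=1-e^{-i\omega}$ reconfirms $S\ge0$ and shows that $S(\omega_0)=0$ forces either (i) $\omega_0=0$ and $m=0$, i.e., $v$ is a coboundary, or (ii) $\omega_0\neq0$ and $v$ satisfies the twisted cohomological identity $v=e^{i\omega_0}h\circ f-h$ for some $h\in\mathcal B$ (equivalently, Cauchy--Schwarz is attained at $\omega_0$ together with a sharp phase alignment).

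The main obstacle is packaging the union over $\omega_0\in[0,2\pi]$ of these cohomological identities into a single closed subspace of $C^\eta$ of infinite codimension. Case (i) is handled by the classical coboundary subspace, which is infinite codimension in $C^\eta$ by Livsic's theorem (vanishing of all periodic-orbit integrals, lifted from the uniformly expanding base of the tower). For (ii), I would invoke a twisted Livsic characterisation: $v$ is a $\mathcal B$-twisted coboundary at $\omega_0$ iff $\sum_{j=0}^{p-1}e^{ij\omega_0}v(f^jx_p)=(e^{ip\omega_0}-1)h(x_p)$ on every periodic orbit $x_p$ of period $p$, which at rational $\omega_0$ imposes vanishing constraints of the same flavour as ordinary Livsic, and at irrational $\omega_0$ constrains $v$ through a convergent series in periodic-orbit data. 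The plan is to argue that the entire collection, as $(x_p,\omega_0)$ vary, reduces to countably many independent closed linear conditions on $v$, whose common kernel provides $\mathcal V$. Verifying this independence, transported through the Young-tower construction, is the delicate point; it parallels the argument in~\cite{MG08}, the novelty here being the use of a continuous (rather than analytic) parameter family of twisted resolvents in the representation of $S$.
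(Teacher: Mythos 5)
There is a genuine gap, and it is precisely the one this paper was written to repair. Your key claim --- that for $\beta>1$ ``standard Young-tower estimates'' give $\sum_k\|L^kv\|_{\mathcal B}<\infty$ in a Banach space $\mathcal B$ of regular functions, so that the twisted resolvent is norm-continuous and $g=\sum_{k\ge1}L^kv$, and later the twisted solution $h$, lie in $\mathcal B$ --- is not available on a tower with polynomial tails. What the tower estimates give is $|L^kv|_{L^1}=O(k^{-\beta})$ together with $|L^kv|_\infty=O(1)$, hence by interpolation summability of $|L^kv|_{L^q}$ only for $q<\beta$; there is no spectral gap for $L$ (twisted or not) in a norm that controls pointwise values. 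Consequently $g$ and $h$ are merely $L^p$ functions, defined almost everywhere, while periodic orbits are null sets: the Livsic-type step at the heart of your typicality argument --- evaluating the (twisted) cohomological identity $v=e^{i\omega_0}h\circ f-h$ along periodic orbit data to generate the countably many independent obstructions defining $\mathcal V$ --- cannot be performed. This is exactly the defect of the sketch in \cite[Section~4]{MG08} that the remark following Theorem~\ref{thm-NUE} records (``the iterates of $L^k\hat v$ are summable as claimed, but only in $L^p$ spaces, so the step that involves evaluation at periodic data is problematic''), so your proposal reproduces the incomplete argument rather than completing it.

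The paper's route avoids this by doing all regularity-sensitive work for the induced Gibbs--Markov map $F=f^r:Y\to Y$: the twisted operators $P_\omega$ do have a spectral gap, uniformly on compact subsets of $(0,2\pi)$, in the Lipschitz norm $F_\theta(Y)$ (Proposition~\ref{prop-P}), so $\chi_\omega=\sum_jP_\omega^jV_\omega$ converges in $F_\theta(Y)$ and the periodic-orbit perturbation argument goes through for the induced spectrum $S^Y$ (Lemma~\ref{lem-induced}). The genuinely new work, for which your outline has no substitute, is relating $S$ to $S^Y$: the identity $S(\omega)=S^Y(\omega)/\bar r$ at irrational angles (Lemma~\ref{lem-SYS}) is proved via the circle extension $f_\omega$, the weak invariance principle of \cite{GMsub}, and $L^p$ moment bounds obtained from a martingale--coboundary decomposition plus Burkholder's inequality (Propositions~\ref{prop-wip} and~\ref{prop-moment}); Wiener--Khintchine and the typical positivity of the variance $\sigma^2$ then handle continuity on $[0,2\pi]$ and the endpoints $\omega=0,2\pi$, yielding the uniform lower bound. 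To salvage your direct approach you would need a Banach space for $f$ itself on which the (twisted) transfer operator iterates are summable in a norm admitting pointwise evaluation; no such space is known in this polynomial setting, and that is why the inducing-plus-transfer argument is used instead.
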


\begin{rmk}
This result was claimed in~\cite[Section~4]{MG08} but the proof sketched there is incomplete (the iterates of $L^k\hat v$ are summable as claimed, but only in $L^p$ spaces, so the step that involves evaluation at periodic data is problematic).  
A full proof is given in this paper.  
Moreover, we consider also the case $\beta\in(0,1]$.
\end{rmk}

The remainder of this paper is organised as follows.
In Section~\ref{sec-NUE}, we give the definition for nonuniformly expanding map, and state Theorem~\ref{thm-main} which implies Theorem~\ref{thm-NUE}.
Also, we show how Theorem~\ref{thm-LSV} follows from Theorem~\ref{thm-main}.
In Section~\ref{sec-proof}, we prove Theorem~\ref{thm-main}.

\section{Nonuniformly expanding maps}
\label{sec-NUE}

Let $(X,d)$ be a locally compact separable bounded metric space with
Borel probability measure $m_0$ and let $f:X\to X$ be a nonsingular
transformation for which $m_0$ is ergodic.
Let $Y\subset X$ be a measurable subset with $m_0(Y)>0$, and
let $\alpha$ be an at most countable measurable partition
of $Y$.    We suppose that there is an $L^1$
{\em return time} function $r:Y\to\Z^+$, constant on each $a\in\alpha$ with
value $r(a)\ge1$, and constants $\lambda>1$, $\eta\in(0,1)$, $C\ge1$,
such that for each $a\in\alpha$,
\begin{itemize}
\item[(1)] $F=f^{r(a)}:a\to Y$ is a measure-theoretic bijection.
\item[(2)] $d(Fx,Fy)\ge \lambda d(x,y)$ for all $x,y\in a$.
\item[(3)] $d(f^\ell x,f^\ell y)\le Cd(Fx,Fy)$ for all $x,y\in a$,
$0\le \ell <r(a)$.
\item[(4)] $g_a=\frac{d(m_0|{a}\circ F^{-1})}{dm_0|_Y}$
satisfies $|\log g_a(x)-\log g_a(y)|\le Cd(x,y)^\eta$ for all
\mbox{$x,y\in Y$}.
\end{itemize}
Such a dynamical system $f:X\to X$ is called {\em nonuniformly expanding}.
The induced map $F=f^r:Y\to Y$ is uniformly expanding and 
there is a unique $F$-invariant probability measure $\mu_Y$ on $Y$ equivalent
to $m_0|_Y$ with density bounded above and below.  Moreover $\mu_Y$ is mixing.
This leads to a unique $f$-invariant probability measure $\mu$ on $X$ equivalent
to $m_0$ (see for example~\cite[Theorem~1]{Young99}).  

We assume throughout that
$\gcd\{r(a)-r(b):a,b\in\alpha\}=1$.  In particular, $\mu$ is mixing.
The assumption is trivially satisfied for the maps~\eqref{eq-LSV} since $\{r(a):a\in\alpha\}=\Z^+$.  Such a restriction is not completely avoidable, since
the power spectrum has a finite number of (removable) singularities~\cite{MG08} when $F$ is not mixing.

The results in~\cite{MG08} apply directly when 
$\mu_Y(y\in Y:r(y)>n)$ decays exponentially.
In this paper, we show that the same results hold when $r\in L^{2+}(Y)$, thus proving 
Theorem~\ref{thm-NUE}.\footnote{Throughout, we write $\phi\in L^{p+}(Y)$ as shorthand for $\phi\in L^{p+\eps}(Y)$ for some $\eps>0$.}
In addition, we obtain results in the case $r\in L^{1+}(Y)$.

Given $v\in L^\infty(X)$ and
$\omega\in[0,2\pi]$, we define 
the {\em induced observable} $V_\omega:Y\to\C$,
\begin{align*}
 V_\omega(y)=\sum_{\ell=0}^{r(y)-1}e^{i\ell\omega}v(f^\ell y).
\end{align*}

\begin{prop}  \label{prop-V}
For all $\omega_0,\omega\in[0,2\pi]$, $a\in\alpha$,
\begin{align*}
 |1_aV_\omega|_\infty\le |v|_\infty r(a), \quad
|1_aV_\omega-1_aV_{\omega_0}|_\infty\le 
2|v|_\infty r(a)^2|\omega-\omega_0|.
\end{align*}
\end{prop}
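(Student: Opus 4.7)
Both bounds follow directly from the definition of $V_\omega$ together with elementary estimates, so this proposition is really a routine warm-up lemma. My plan is to handle the two inequalities separately, in each case exploiting the crucial fact that $r$ is constant on the partition element $a$, so that on $a$ the sum defining $V_\omega$ has exactly $r(a)$ terms.

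For the $L^\infty$ bound on $1_a V_\omega$: since $|e^{i\ell\omega}|=1$, each summand $e^{i\ell\omega}v\circ f^\ell$ has modulus at most $|v|_\infty$, and the triangle inequality over $\ell=0,\dots,r(a)-1$ yields the bound $r(a)|v|_\infty$. No smoothness or dynamical input is needed beyond $v\in L^\infty$.

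For the Lipschitz-in-$\omega$ bound: I would write
\[
1_a V_\omega - 1_a V_{\omega_0}=1_a\sum_{\ell=0}^{r(a)-1}(e^{i\ell\omega}-e^{i\ell\omega_0})\,v\circ f^\ell,
\]
and then apply the elementary estimate $|e^{i\ell\omega}-e^{i\ell\omega_0}|\le \ell\,|\omega-\omega_0|$ (which follows from the mean value theorem applied to $t\mapsto e^{it}$, or directly from $|e^{ix}-1|\le|x|$). Bounding $\ell$ by $r(a)$ and summing gives a factor $r(a)\cdot r(a)=r(a)^2$, and each $|v\circ f^\ell|$ contributes at most $|v|_\infty$, producing $r(a)^2|v|_\infty|\omega-\omega_0|$. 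The factor $2$ in the stated bound is just slack (indeed, a slightly more careful count produces $\tfrac{1}{2}r(a)(r(a)-1)|v|_\infty|\omega-\omega_0|$).

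There is no real obstacle: the proposition is a purely algebraic/trigonometric computation that sidesteps the hyperbolic structure of $F$ entirely. Its role, presumably, is to supply uniform control on $V_\omega$ and its continuity in $\omega$ so that later steps can interchange sums, take limits in $\omega$, and apply bounded operators on spaces of observables with weights depending on $r(a)$.
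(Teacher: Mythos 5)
Your proof is correct and follows essentially the same route as the paper: the triangle inequality over the $r(a)$ terms for the first bound, and the elementary estimate $|e^{i\ell\omega}-e^{i\ell\omega_0}|\lesssim \ell\,|\omega-\omega_0|$ summed over $\ell$ for the second (the paper uses the slightly lossier bound $|e^{i\ell\omega}-e^{i\ell\omega_0}|\le 2|\ell(\omega-\omega_0)|$, which is where its factor $2$ comes from, so your observation that this factor is slack is accurate).
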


\begin{proof}
The first estimate is immediate.  Also, for $y\in a$,
\[
|V_\omega(y)-V_{\omega_0}(y)|\le 
\sum_{\ell=0}^{r(a)-1}|e^{i\ell\omega}-e^{i\ell\omega_0}||v|_\infty
\le 2\sum_{\ell=0}^{r(a)-1}|\ell(\omega-\omega_0)||v|_\infty
\le 2|v|_\infty r(a)^2|\omega-\omega_0|,
\]
as required.
\end{proof}

\begin{cor} \label{cor-V}
Let $p\ge1$.  If $r\in L^p(Y)$ then $V_\omega\in L^p(Y)$ for all $\omega\in[0,2\pi]$.
Moreover,
$\omega\mapsto V_\omega$ is a continuous 
map from $[0,2\pi]$ to $L^p(Y)$.
\end{cor}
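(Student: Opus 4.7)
The plan is to obtain both conclusions directly from Proposition~\ref{prop-V} together with the hypothesis $r\in L^p(Y)$, using pointwise bounds on partition elements for integrability and a dominated convergence argument for continuity.

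For the first assertion, I would argue that on each atom $a\in\alpha$ the first estimate in Proposition~\ref{prop-V} gives $|V_\omega(y)|\le |v|_\infty r(a)=|v|_\infty r(y)$. Since the atoms partition $Y$, this pointwise bound holds on all of $Y$, so
\[
\|V_\omega\|_p^p=\int_Y |V_\omega|^p\,d\mu_Y\le |v|_\infty^p\int_Y r^p\,d\mu_Y=|v|_\infty^p\|r\|_p^p<\infty,
\]
establishing $V_\omega\in L^p(Y)$ uniformly in $\omega$.

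For continuity, the naive approach using the second estimate in Proposition~\ref{prop-V} yields $|V_\omega-V_{\omega_0}|\le 2|v|_\infty r^2|\omega-\omega_0|$, but $r^2$ need not lie in $L^p$ under the hypothesis $r\in L^p$, so one cannot integrate this bound directly. Instead, I would fix $y\in Y$: since $r(y)$ is finite (as $r\in L^1$), $V_\omega(y)$ is a finite sum of continuous functions of $\omega$, hence $V_\omega(y)\to V_{\omega_0}(y)$ pointwise as $\omega\to\omega_0$. Combined with the uniform pointwise bound $|V_\omega(y)-V_{\omega_0}(y)|\le 2|v|_\infty r(y)$ from the first part of Proposition~\ref{prop-V} (applied to $V_\omega$ and $V_{\omega_0}$ separately), we have $|V_\omega-V_{\omega_0}|^p$ dominated by the fixed $L^1$ function $(2|v|_\infty r)^p$. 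The dominated convergence theorem then yields $\|V_\omega-V_{\omega_0}\|_p\to 0$.

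The main delicate point is recognising that the Lipschitz-type bound from Proposition~\ref{prop-V} is not usable directly in the $L^p$ norm because of the quadratic factor $r^2$; bypassing this via pointwise convergence combined with the $L^p$ majorant is the crux of the continuity argument. Everything else is routine.
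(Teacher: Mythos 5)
Your argument is correct, and for the first assertion it coincides with the paper's (the pointwise bound $|V_\omega|\le |v|_\infty r$ integrated against $\mu_Y$). For continuity, however, you take a genuinely different route. The paper does use the quadratic estimate $|1_aV_\omega-1_aV_{\omega_0}|_\infty\le 2|v|_\infty r(a)^2|\omega-\omega_0|$, but only after truncating: it splits the sum over atoms into those with $r(a)\le R$, where this Lipschitz bound gives a contribution $\ll R^{2p}|\omega-\omega_0|^p$, and those with $r(a)>R$, where the crude bound $2|v|_\infty r(a)$ and the tail $\int_{\{r>R\}}r^p\,d\mu_Y$ take over; one first fixes $R$ to make the tail small and then lets $\omega\to\omega_0$. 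You instead bypass the Lipschitz estimate entirely: for fixed $y$ the sum defining $V_\omega(y)$ is finite (indeed $r(y)\in\Z^+$ by definition, so the appeal to $r\in L^1$ is not even needed), hence $\omega\mapsto V_\omega(y)$ is continuous pointwise, and the uniform majorant $(2|v|_\infty r)^p\in L^1(\mu_Y)$ lets you conclude by dominated convergence (applied along arbitrary sequences $\omega_k\to\omega_0$, which suffices for continuity). Your observation that the $r^2$ factor prevents integrating the Lipschitz bound directly is exactly right and is the reason the paper truncates. What each approach buys: yours is shorter and a standard soft argument; the paper's truncation is a hands-on version of the same uniform-integrability idea and yields an explicit quantitative bound, $|V_\omega-V_{\omega_0}|_p^p\le 2^p|v|_\infty^p\bigl(R^{2p}|\omega-\omega_0|^p+\int_{\{r>R\}}r^p\,d\mu_Y\bigr)$, from which a modulus of continuity can be extracted once the tail of $r$ is quantified. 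Both proofs are complete and valid.
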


\begin{proof}
We have 
$|V_\omega|_p^p\le \sum_{a\in\alpha} \mu_Y(a)|1_aV_\omega|^p_\infty
\le \sum_{a\in\alpha} \mu_Y(a) |v|^p_\infty r(a)^p=|v|_\infty^p|r|_p^p<\infty$,
so $V_\omega\in L^p(Y)$.

Next we prove continuity.  Let $\omega_0\in[0,2\pi]$.  Then
\begin{align*}
|V_\omega-V_{\omega_0}|_p^p 
& \le \sum_{a\in\alpha}\mu_Y(a)|1_aV_\omega-1_aV_{\omega_0}|_\infty^p \\
& \le \sum_{a\,:\,r(a)\le R}\mu_Y(a)2^p|v|_\infty^p R^{2p}|\omega-\omega_0|^p
+ \sum_{a\,:\,r(a)>R}\mu_Y(a)2^p|v|_\infty^pr(a)^p  \\
& \le 2^p|v|_\infty^p\Bigl(R^{2p}|\omega-\omega_0|^p + \int_{\{r(a)>R\}}r^p\,d\mu_Y\Bigr).
\end{align*}
Fix $R$ large so that the second term is as small as desired.   For this fixed $R$, the first term converges to zero as $\omega\to\omega_0$, proving continuity at $\omega_0$.
\end{proof}

Define $V^*_\omega:Y\to\C$,
\begin{align*}
 V^*_\omega(y)=\max_{0\le j\le r(y)-1}\Bigl|\sum_{\ell=0}^je^{i\ell\omega}v(f^\ell y)\Bigr|.
\end{align*}
Again $|1_aV^*_\omega|_\infty\le |v|_\infty r(a)$, so
if $r\in L^p(Y)$, then $V^*_\omega\in L^p(Y)$ for all $\omega\in[0,2\pi]$.

We now state our main result; this is proved in Section~\ref{sec-proof}.
\begin{thm} \label{thm-main}
	Let $f:X\to X$ be a nonuniformly expanding map and let
	$v:X\to\R$ be a H\"older observable.  

	(a)  Suppose that $r\in L^{2+}(Y)$.
Then 
the limit $S(\omega)=\lim_{n\to\infty}n^{-1}\int_X|e^{ij\omega}v\circ f^j|^2\,d\mu$ exists for all $\omega\in (0,2\pi)$ and extends to a continuous
function on $[0,2\pi]$.
Typically, $\inf_{\omega\in (0,2\pi)} S(\omega)>0$.

(b) Suppose that $r\in L^a(Y)$ and that
$V^*_\omega\in L^{bp}(Y)$ for all $\omega\in (0,2\pi)$, where
$a\in(1,\infty]$, $1/a+1/b=1$ and $p>2$.
Suppose further that $\omega\mapsto V_\omega$ is a continuous map from $(0,2\pi)$ to $L^2(Y)$.
Then the limit $S(\omega)=\lim_{n\to\infty}n^{-1}\int_X|e^{ij\omega}v\circ f^j|^2\,d\mu$ exists for almost every $\omega\in (0,2\pi)$ and extends to a continuous
function $\tilde S(\omega)$ on $(0,2\pi)$.
Typically, $\tilde S(\omega)$ is nonvanishing on $(0,2\pi)$.
\end{thm}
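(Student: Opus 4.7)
The plan is to lift the problem to the induced uniformly expanding map $F=f^r:Y\to Y$ and run the spectral arguments of~\cite{MG08} there, with the induced observable $V_\omega$ playing the role of the original observable. Write $L$ for the transfer operator of $F$; since $F$ is uniformly expanding and mixing, $L$ has a spectral gap on a suitable Banach space of H\"older functions on $Y$, and the aperiodicity hypothesis $\gcd\{r(a)-r(b)\}=1$ keeps the twisted operator $L_\omega\phi=L(e^{i\omega r}\phi)$ away from spectral radius~$1$ throughout $(0,2\pi)$.

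The first step is a return-time decomposition of the twisted Birkhoff sum: for $y\in Y$ and $r_k(y)=\sum_{j=0}^{k-1}r(F^j y)$,
\[
\sum_{j=0}^{r_N(y)-1} e^{ij\omega}\,v(f^j y)=\sum_{k=0}^{N-1} e^{ir_k(y)\omega}\, V_\omega(F^k y).
\]
Combined with Kac's formula and an estimate for the incomplete excursion (controlled by $V^*_\omega$), this reduces $S(\omega)$ to the asymptotic second moment of the twisted cocycle sum on $(Y,F,\mu_Y)$. Using $L_\omega^k\phi=L^k(e^{i\omega r_k}\phi)$ and summing the resulting geometric series via the spectral gap of $L_\omega$ yields a resolvent formula of the schematic form
\[
S(\omega) = \bar r^{-1}\bigl(2\Re\,\langle V_\omega,(I-L_\omega)^{-1}V_\omega\rangle_{L^2(\mu_Y)} - |V_\omega|_{L^2(\mu_Y)}^2\bigr),
\]
which is continuous in $\omega$ wherever the pairings are finite.

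In part~(a), Corollary~\ref{cor-V} gives $V_\omega\in L^{2+}(Y)$ uniformly and continuously in $\omega$, so the $L^2$ pairing makes sense directly and the formula extends continuously to $[0,2\pi]$ (any apparent singularity of $(I-L_\omega)^{-1}$ at $\omega=0$ is cancelled by projecting away the $1$-eigenvector of $L$, equivalently by centring $v$). In part~(b), the assumption that $\omega\mapsto V_\omega$ is continuous in $L^2$ on $(0,2\pi)$ gives the $L^2$ pairing pointwise on the open interval, but its $L^2$ norm need not be bounded near the endpoints; the additional hypothesis $V^*_\omega\in L^{bp}$ with $1/a+1/b=1$ and $p>2$ is used via H\"older's inequality and an $L^b$ moment estimate on the iterates $L_\omega^k V_\omega$ to justify passing to the limit and to extract a continuous extension to $(0,2\pi)$.

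Typical non-vanishing then follows by an analyticity-plus-periodic-orbit argument. For fixed $\omega_0\in(0,2\pi)$, the expression for $S(\omega_0)$ is quadratic in $v$ with nondegenerate coefficients, so the vanishing locus $\{v:S(\omega_0)=0\}$ is a closed subspace characterised by linear relations of the form $\sum_{k=0}^{q-1}e^{ik\omega_0}v(f^k x)=0$ obtained by testing against spectral projections at periodic points $x$ of period $q$; varying $(x,q)$ over periodic orbits of arbitrarily large period produces infinitely many linearly independent functionals on any fixed H\"older class, giving infinite codimension. The main technical obstacle is part~(b): when $r\notin L^2$ the resolvent pairing is not uniformly $L^2$-finite, the dual exponents $(a,b)$ and the maximal function $V^*_\omega$ must be used in tandem to tame the iterates $L_\omega^k V_\omega$, and the return of $1$ to the spectrum of $L_\omega$ as $\omega\to 0$ or $2\pi$ is precisely what prevents continuous extension to the closed interval in this regime.
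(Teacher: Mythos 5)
Your first stage (the induced system) is essentially the paper's Lemma~\ref{lem-induced}: the resolvent/series formula $S^Y(\omega)=|V_\omega|_2^2+2\sum_{n\ge1}\Re\int_Y P_\omega^nV_\omega\,\bar V_\omega\,d\mu_Y$, the spectral gap of the twisted operator on $(0,2\pi)$ from aperiodicity of $r$, and periodic-orbit obstructions for typicality. Two caveats there: $V_\omega$ is only locally Lipschitz and unbounded, so the periodic-point evaluation is legitimate only after noting that a single application of $P_\omega$ regularises ($P_\omega V_\omega\in F_\theta(Y)$, Proposition~\ref{prop-PV}(a)), whence $\chi_\omega=\sum_k P_\omega^kV_\omega$ is a genuine Lipschitz function; controlling the iterates merely in $L^b$, as you propose, is exactly the flaw that made the sketch in~\cite[Section~4]{MG08} incomplete. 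Also your stated obstructions $\sum_{k=0}^{q-1}e^{ik\omega_0}v(f^kx)=0$ at periodic points are not the right functionals for general $\omega_0$; one needs the analytic functions built from differences of $A_\omega$ along backward orbits of the periodic point, as in the paper.

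The genuine gap is the step you dispatch with ``Kac's formula and an estimate for the incomplete excursion'': this is the entire content of Lemma~\ref{lem-SYS}, and it does not follow from the return-time identity plus a bound on the two boundary blocks. The power spectrum is a limit of integrals over $X$ at deterministic times $n$, while your identity expresses the sum in terms of a \emph{random} lap number $N_n$; the fluctuations of $N_n$ about $n/\bar r$ involve order $\sqrt{n}$ (or worse, when $r\notin L^2$) complete excursions, so the comparison with the induced second moment is not an error term controlled by $V^*_\omega$. You also give no reason why the limit defining $S(\omega)$ exists at all: the paper obtains it by passing to the circle extension $f_\omega(x,\varphi)=(fx,\varphi+\omega)$, which is ergodic only for irrational $\omega$ --- this is precisely why part~(b) asserts existence only almost everywhere --- then applies the weak invariance principle of~\cite{GMsub} to get a distributional limit with mean $S^Y(\omega)/\bar r$ (Proposition~\ref{prop-wip}), and upgrades this to convergence of second moments via $L^p$ bounds with $p>2$ (Proposition~\ref{prop-moment}, martingale--coboundary plus Burkholder, on $X\times S^1$ in case~(a) and on the Young tower with the H\"older pairing $|r|_a|V^*_\omega|^p_{bp}$ in case~(b)), invoking~\cite{MTorok12}. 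This is where the hypotheses $r\in L^{2+}$, respectively $r\in L^a$ and $V^*_\omega\in L^{bp}$ with $p>2$, actually enter, and none of it is supplied by your outline. Finally, in part~(a) your claim that the endpoint singularity of $(I-L_\omega)^{-1}$ is ``cancelled by centring'' is unsupported and is not how the continuous extension to $[0,2\pi]$ is obtained; the paper uses summable decay of correlations (via~\cite{MT14}) and the Wiener--Khintchine theorem for the extension, and Green--Kubo together with typical positivity of $\sigma^2$~\cite{MN05} for $\inf S>0$.
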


\begin{rmk}
	(i) Young~\cite{Young99} considers the case where $\mu_Y(r>n)=O(1/n^{\beta+1})$ for some $\beta>0$ and deduces decay of correlations at rate $O(1/n^\beta)$.  The case $\beta>1$ is the setting of Theorem~\ref{thm-NUE}.
It is easily seen that $r\in L^{p+}(Y)$ if and only if $\mu(r>n)=O(1/n^{p+})$, so Theorem~\ref{thm-NUE} is a restatement of Theorem~\ref{thm-main}(a).

\vspace{1ex} \noindent (ii) If $r\in L^2(Y)$, then summable decay of correlations follows from~\cite[Corollary~1.3]{MT14} and hence the Wiener-Khintchine Theorem guarantees that the power spectrum extends to a continuous function on $[0,2\pi]$.  However, our proof that the spectrum is typically bounded below requires that $r\in L^{2+}(Y)$.

\vspace{1ex} \noindent (iii) If $r\not\in L^2(Y)$, then we expect (but have been unable to prove) that $S(\omega)\to\infty$ as $\omega\to0$ and $\omega\to 2\pi$.  It would then follow that typically the power spectrum is bounded below also in Theorem~\ref{thm-main}(b).

\vspace{1ex} \noindent (iv) The proof of Theorem~\ref{thm-main}(b) shows that
the limit $S(\omega)$ exists and is continuous on the set of irrational angles $\omega$.   A different argument, which we have not included, shows that the limit exists also for rational angles $\omega\in(0,2\pi)$ and we conjecture that the resulting function $\omega\to S(\omega)$ is continuous on $(0,2\pi)$.
\end{rmk}

\subsection{Application to intermittent maps}

For the intermittent maps~\eqref{eq-LSV}, it is convenient to take $Y=[\frac12,1]$ and to let $r:Y\to\Z^+$ be the first return time
$r(y)=\inf\{n\ge1:f^ny\in Y\}$.  It is standard that $f$ is a nonuniformly expanding map and that
$r\in L^p(Y)$ for any $p<\frac{1}{\gamma}$.  Hence if $\gamma\in(0,\frac12)$, then Theorem~\ref{thm-main} applies directly.
This completes the proof of Theorem~\ref{thm-LSV} for $\gamma\in(0,\frac12)$.

For $\gamma\in[\frac12,1)$, we still have that $r\in L^{1+}(Y)$.
To apply Theorem~\ref{thm-main}, we require the next result.

\begin{prop}  \label{prop-LSV}
Suppose that $\gamma\in(0,1)$ and $v\in C^\eta$ where $0<\eta<\gamma$.
Let $p< 1/(\gamma-\eta)$.
Then $V^*_\omega\in L^p(Y)$ for all $\omega\in(0,2\pi)$ and $\omega\mapsto V_\omega$ is a continuous map from $(0,2\pi)$ to $L^p(Y)$.
\end{prop}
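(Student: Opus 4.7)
The plan is to exploit two elementary facts: (i) for $\omega\in(0,2\pi)$, the geometric sum $\sum_{\ell=0}^j e^{i\ell\omega}$ has modulus at most $2/|1-e^{i\omega}|$ uniformly in $j$; and (ii) a H\"older observable $v$ differs from $v(0)$ only by the $\eta$-th power of the distance to the indifferent fixed point~$0$. Writing $v(f^\ell y)=v(0)+(v(f^\ell y)-v(0))$ and splitting the partial sum accordingly, one obtains
\[
V^*_\omega(y)\le \frac{2|v(0)|}{|1-e^{i\omega}|}+|v|_\eta\sum_{\ell=0}^{r(y)-1}|f^\ell y|^\eta.
\]
All $\omega$-dependence now sits in the first, $y$-independent term; the second term is $\omega$-free and governs the integrability.

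To bound the second term I would invoke the standard orbit estimate for maps of type~\eqref{eq-LSV}: if $y\in Y$ and $r(y)=n$, then $f^\ell y\le C(n-\ell)^{-1/\gamma}$ for $1\le\ell\le n-1$, with $C$ depending only on $\gamma$. (This follows by telescoping the inequality $(f^{\ell+1}y)^{-\gamma}\le(f^\ell y)^{-\gamma}-c$, which is a consequence of the convexity of $u\mapsto(1+u)^{-\gamma}$ applied with $u=2^\gamma(f^\ell y)^\gamma$, or equivalently by comparison with the flow $\dot x=2^\gamma x^{1+\gamma}$.) Since $\eta<\gamma$, summing gives
\[
\sum_{\ell=0}^{r(y)-1}|f^\ell y|^\eta\le 1+C^\eta\sum_{k=1}^{n-1}k^{-\eta/\gamma}\le C'n^{1-\eta/\gamma}.
\]
Raising to the $p$-th power, integrating over the partition $\{r=n\}$ and using the standard tail estimate $\mu_Y(r=n)\asymp n^{-1-1/\gamma}$ yields
\[
\int_Y(V^*_\omega)^p\,d\mu_Y\lesssim \frac{1}{|1-e^{i\omega}|^p}+\sum_{n\ge1}n^{-1-1/\gamma+p(1-\eta/\gamma)},
\]
and the series converges precisely when $p(\gamma-\eta)<1$, i.e.\ $p<1/(\gamma-\eta)$, giving $V^*_\omega\in L^p(Y)$ for every $\omega\in(0,2\pi)$.

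Continuity of $\omega\mapsto V_\omega$ at $\omega_0\in(0,2\pi)$ then follows by the same tail-splitting scheme as in Corollary~\ref{cor-V}. Fix a compact neighbourhood $K\subset(0,2\pi)$ of $\omega_0$; on $K$ the factor $|1-e^{i\omega}|^{-1}$ is bounded, so the previous estimate yields a tail bound $\int_{\{r>R\}}(V^*_\omega)^p\,d\mu_Y\to 0$ as $R\to\infty$, uniformly in $\omega\in K$. Combined with the pointwise bound $|V_\omega-V_{\omega_0}|\le 2|v|_\infty R^2|\omega-\omega_0|$ on $\{r\le R\}$ from Proposition~\ref{prop-V} and the trivial bound $|V_\omega-V_{\omega_0}|\le V^*_\omega+V^*_{\omega_0}$ on $\{r>R\}$, choosing $R$ large first and then $\omega$ close enough to $\omega_0$ gives continuity of $\omega\mapsto V_\omega$ in $L^p(Y)$.

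The only substantive step is the orbit estimate $f^\ell y\le C(n-\ell)^{-1/\gamma}$; the rest is bookkeeping. Note that the H\"older piece in the decomposition is estimated without exploiting any cancellation from $e^{i\ell\omega}$, which is what forces the exponent restriction $p<1/(\gamma-\eta)$ rather than the optimistic $p<1/\gamma$. The divergence of $|1-e^{i\omega}|^{-1}$ as $\omega\to 0,2\pi$ likewise explains why the conclusion must be restricted to the open interval: when $v(0)\ne 0$, $V^*_\omega$ genuinely fails to be in $L^p(Y)$ at the endpoints.
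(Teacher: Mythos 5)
Your proof is correct, and its core --- splitting $v=v(0)+(v-v(0))$, bounding the constant piece by the geometric sum $2|v(0)|/|1-e^{i\omega}|$ and the H\"older piece via the orbit estimate $|f^\ell y|\ll (r(y)-\ell)^{-1/\gamma}$, leading to $V^*_\omega\lesssim_K 1+r^{1-\eta/\gamma}$ and the exponent condition $p(\gamma-\eta)<1$ --- is exactly the paper's argument (the paper phrases the integrability as $\int_Y r^{p(1-\eta/\gamma)}\,d\mu_Y<\infty$ rather than summing over the level sets $\{r=n\}$, which is the same computation). Where you genuinely diverge is the continuity step: the paper estimates $|V_\omega-V_{\omega_0}|$ directly on each piece, using $|e^{ix}-1|\le 2|x|^\eps$ to obtain $|V_\omega-V_{\omega_0}|_p\ll|\omega-\omega_0|^\eps$ for the part vanishing at $0$, and an explicit two-term bound for the constant part, thereby getting a quantitative H\"older modulus of continuity in $\omega$; you instead rerun the truncation scheme of Corollary~\ref{cor-V}, with Proposition~\ref{prop-V} on $\{r\le R\}$ and with the tail over $\{r>R\}$ controlled by the uniform-on-compacta $L^p$ bound for $V^*_\omega$ rather than by $|v|_\infty r$ --- a necessary substitution here, since $r\notin L^p(Y)$ once $p\ge 1/\gamma$. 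Your softer argument yields continuity without a modulus, which is all that is needed downstream, so both routes suffice. Two minor points: your tail-splitting implicitly assumes $p\ge 1$ (for the triangle inequality and $(a+b)^p\le 2^{p-1}(a^p+b^p)$), which is the only case used in the paper; and your closing remark that $V^*_\omega\notin L^p(Y)$ at the endpoints when $v(0)\neq 0$ is accurate only for $p\ge 1/\gamma$, since for $p<1/\gamma$ one has $V^*_\omega\le |v|_\infty r\in L^p(Y)$ for every $\omega$ --- but this aside plays no role in your proof.
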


\begin{proof}
This is analogous to the situation in~\cite[Theorem~4.1]{GMsub} (see also~\cite{Gouezel04}).

Writing $v=(v-v(0))+v(0)$, we may consider the cases $v(0)=0$ and $v\equiv v(0)$ separately.
For $v(0)=0$, we show that $V^*_\omega\in L^p(Y)$ for all $\omega\in[0,2\pi]$
and that
$\omega\mapsto V_\omega$ is a continuous map from $[0,2\pi]$ to $L^p(Y)$.
For $v\equiv v(0)$, we show that $V^*_\omega\in L^\infty(Y)$ for all
$\omega\in(0,2\pi)$ and that
$\omega\mapsto V_\omega$ is a continuous map from $(0,2\pi)$ to $L^q(Y)$ for
all $q<\infty$.

First, suppose that $v(0)=0$, so $|v(y)|\le |v|_\eta|y|^\eta$.
It is well known that
$|f^\ell y|\ll (r(y)-\ell )^{-1/\gamma}$ for $\ell =1,\dots,r(y)$.
(See for example~\cite{LiveraniSaussolVaienti99}.)
Hence for $y\in Y$, $\omega\in[0,2\pi]$,
\begin{align*}
|V^*_\omega(y)| & \le 
	\sum_{\ell=0}^{r(y)-1}|v(f^\ell y)| \le
	\sum_{\ell=0}^{r(y)-1}|v|_{\eta}|f^\ell y|^\eta\ll
	\sum_{\ell=1}^{r(y)-1}(r(y)-\ell)^{-\eta/\gamma}\ll 
r(y)^{1-\eta/\gamma}.
\end{align*}
If $p<1/(\gamma-\eta)$, then $p(1-\eta/\gamma)<1/\gamma$ and
\begin{align*} 
\int_Y|V^*_\omega(y)|^p\,d\mu_Y
\ll \int_Y r^{p(1-\eta/\gamma)}\,d\mu_Y
<\infty.
\end{align*}

Next, we recall the estimate $|e^{ix}-1|\le 2\min\{1,|x|\}\le 2|x|^\eps$, which holds for all $x\in\R$, $\eps\in[0,1]$.
For all $y\in Y$, $\omega_0,\omega\in[0,2\pi]$,
\begin{align*}
	|V_\omega(y)-V_{\omega_0}(y)| & \le 
	\max_{0\le \ell<r(y)} |e^{i\ell\omega}-e^{i\ell\omega_0}|
	\sum_{\ell=0}^{r(y)-1} |v|_\eta|v(f^\ell y)|^\eta
	\\ & \ll r(y)^\eps|\omega-\omega_0|^\eps \sum_{\ell=0}^{r(y)-1}(r(y)-\ell)^{-\eta/\gamma}
	\ll r(y)^{1-\eta/\gamma+\eps}|\omega-\omega_0|^\eps,
\end{align*}
so for $\eps$ sufficiently small, $|V_\omega-V_{\omega_0}|_p\ll |\omega-\omega_0|^\eps$.

It remains to consider the case $v\equiv v(0)$.  We have
\[
	V^*_\omega(y)=|v(0)|\max_{0\le \ell<r(y)}|(1-e^{i\omega \ell})/(1-e^{i\omega})|\le 
2|v(0)||1-e^{i\omega}|^{-1},
\]
for all $y\in Y$, $\omega\in(0,2\pi)$.
Moreover, for $\omega_0,\omega\in(0,2\pi)$, regarding $\omega_0$ as fixed,
$|V_\omega(y)-V_{\omega_0}(y)|\le g_1(\omega,y)+g_2(\omega)$
where
\begin{align*}
& g_1(\omega,y)  = |v(0)||1-e^{i\omega_0}|^{-1} |e^{i\omega r(y)}-e^{i\omega_0 r(y)}|, \\
& g_2(\omega)  =2|v(0)||(1-e^{i\omega})^{-1}-(1-e^{i\omega_0})^{-1}|.
\end{align*}
Clearly, $g_2(\omega)\to0$ as $\omega\to\omega_0$.
Also, taking $\eps=1/q$, we have that $g_1(\omega,y)\ll r(y)^{1/q}|\omega-\omega_0|^{1/q}$, so
$\int_Y|g_1(\omega,y)|^q\,d\mu_Y\ll |r|_1|\omega-\omega_0|$.
It follows that $g_1(\omega,\cdot)\to0$ in $L^q(Y)$, and hence similarly for
$|V_\omega-V_{\omega_0}|$, as $\omega\to\omega_0$.
This completes the proof.
\end{proof}

\begin{pfof}{Theorem~\ref{thm-LSV}}
	As already mentioned, the case $\gamma\in(0,\frac12)$ follows directly from Theorem~\ref{thm-main}(a).

	When $\gamma\in[\frac12,1)$, we require that $\eta>(3\gamma-1)/2$.  Without loss, $\eta\in((3\gamma-1)/2,\gamma)$.  
		Note that $1/(\gamma-\eta)>2/(1-\gamma)>2$.
		By Proposition~\ref{prop-LSV},
$\omega\mapsto V_\omega$ is a continuous map from $(0,2\pi)$ to $L^2(Y)$.

Let $a=1/(\gamma+\eps)$, $b=1/(1-\gamma-\eps)$ where $\eps\in(0,1-\gamma)$.
		Then $r\in L^a(Y)$.
		Since $1/(\gamma-\eta)>2/(1-\gamma)$, it follows from Proposition~\ref{prop-LSV} that $V^*_\omega\in L^{2/(1-\gamma-\delta)}(Y)$ for $\delta>0$ sufficiently small.  Hence we can choose $p>2$, $\eps>0$ such that
		$V^*_\omega\in L^{bp}(Y)$ for all $\omega\in(0,2\pi)$.
Now apply Theorem~\ref{thm-main}(b).
\end{pfof}

\section{Proof of Theorem~\ref{thm-main}}
\label{sec-proof}

In Subsection~\ref{sec-induced}, we prove a version of our main results
for the induced system $F=f^r:Y\to Y$.  This result is lifted to the original system $f:Y\to Y$
in Subsection~\ref{sec-SYS}.
In Subsection~\ref{sec-subproof}, we complete the proof of Theorem~\ref{thm-main}.

\subsection{The induced system}
\label{sec-induced}

Let $f:X\to\ X$ be a nonuniformly expanding map as in Section~\ref{sec-NUE} with induced map $F=f^r:Y\to Y$ and partition $\alpha$.
Define $r_n=\sum_{j=0}^{n-1}r\circ F^j$.   The {\em induced power spectrum}
$S^Y:(0,2\pi)\to\R$ is given by
\begin{align*}
	S^Y(\omega) & =\lim_{n\to\infty}n^{-1}\int_Y\Bigl|\sum_{j=0}^{n-1}e^{i\omega r_j}V_{\omega}\circ F^j\Bigl|^2\,d\mu_Y.
\end{align*}

We can now state the main result in this subsection.

\begin{lemma}   \label{lem-induced}
Suppose that $r\in L^1(Y)$ and
that $\omega\mapsto V_\omega$ is continuous as a function from $(0,2\pi)$ to $L^2(Y)$.
\begin{itemize}
\item[(a)]
The pointwise limit
$S^Y:(0,2\pi)\to[0,\infty)$ exists and is continuous.
\item[(b)] Typically, $\{\omega\in(0,2\pi):S^Y(\omega)=0\}=\emptyset$.
\end{itemize}
\end{lemma}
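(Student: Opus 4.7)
The plan is to identify $S^Y(\omega)$ as the Fourier-type sum of autocorrelations of the induced observable $V_\omega$ under a twisted transfer operator $L_\omega$ of the Gibbs--Markov map $F$, to use quasi-compactness of $L_\omega$ to get convergence and continuity, and to interpret vanishing of the limit as a coboundary condition that imposes infinitely many independent constraints on $v$. Define the isometry $(U_\omega\phi)(y)=e^{i\omega r(y)}\phi(Fy)$ on $L^2(\mu_Y)$, so that $U_\omega^j V_\omega=e^{i\omega r_j}V_\omega\circ F^j$ and $S_n(\omega):=\sum_{j=0}^{n-1}U_\omega^j V_\omega$. Expanding $\|S_n\|_{L^2}^2$ using $U_\omega^\ast U_\omega=I$ together with the duality $\int(\phi\circ F)\chi\,d\mu_Y=\int\phi\,L\chi\,d\mu_Y$ leads to the Toeplitz form
\[
n^{-1}\|S_n(\omega)\|_{L^2}^2=\sum_{|m|<n}\Bigl(1-\tfrac{|m|}{n}\Bigr)\psi_m(\omega),\qquad \psi_m(\omega)=\int_Y V_\omega\cdot L_\omega^m\overline{V_\omega}\,d\mu_Y,
\]
where $L_\omega\phi=L(e^{i\omega r}\phi)$ and $\psi_{-m}=\overline{\psi_m}$. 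Hence $S^Y(\omega)=\sum_{m\in\Z}\psi_m(\omega)$, existing and continuous on $(0,2\pi)$, reduces to showing that $\sum_m|\psi_m(\omega)|$ is finite and locally uniformly bounded on compact sets $K\subset(0,2\pi)$.

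For this I appeal to Aaronson--Denker--Sarig--Gou\"ezel theory for twisted transfer operators of Gibbs--Markov maps: since $\gcd\{r(a)-r(b):a,b\in\alpha\}=1$, the family $L_\omega$ acts quasi-compactly on a Banach space $\mathcal{B}$ of piecewise H\"older functions, with spectral radius strictly less than one and continuous in $\omega$, giving $\|L_\omega^m\phi\|_{\mathcal{B}}\le C_K\rho_K^m\|\phi\|_{\mathcal{B}}$ for $\omega\in K$ with $\rho_K<1$. The principal technical obstacle is that $V_\omega$ typically does not lie in $\mathcal{B}$: its H\"older seminorm on a cylinder $a$ scales with $r(a)$, while we only assume $r\in L^1(Y)$. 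I propose to truncate $V_\omega = V_\omega^{(N)}+R_\omega^{(N)}$ with $V_\omega^{(N)}=\mathbf{1}_{\{r\le N\}}V_\omega\in\mathcal{B}$, to bound $|\psi_m(\omega)|$ using exponential decay of $L_\omega^m$ on $\mathcal{B}$ applied to $V_\omega^{(N)}$, and to handle the remainder via Cauchy--Schwarz together with the trivial $L^2$ contraction $\|L_\omega\|_{L^2}\le 1$ and $\|R_\omega^{(N)}\|_{L^2}\to 0$ (the latter guaranteed by the $L^2$-continuity hypothesis and $r\in L^1$). Letting $N$ grow with $m$ in a balanced fashion should then deliver summability with local uniformity in $\omega$, yielding part~(a).

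For part~(b), $S^Y(\omega_0)=0$ forces $\|S_n(\omega_0)\|_{L^2}=o(\sqrt n)$. Applied to the quasi-compact operator $L_{\omega_0}$ on $\mathcal{B}$, the standard Gordin--Liv\v{s}ic/Browder dichotomy then yields a coboundary equation $V_{\omega_0}=\phi-U_{\omega_0}\phi$ for some $\phi\in L^2(\mu_Y)$. Evaluating this equation along periodic orbits of $F$ with distinct $r$-sums produces a countable family of linearly independent constraints on the H\"older observable $v$, so the exceptional observables for which $S^Y$ vanishes at a fixed $\omega_0$ lie in a closed subspace of infinite codimension in $C^\eta$. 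Continuity of $S^Y$ from (a) lets one reduce to a countable dense set of such $\omega_0$, completing (b). The most delicate step is the truncation/balance in part (a), where the gap between the natural $\mathcal{B}$-regularity of the spectral theory and the merely $L^2$ regularity of $V_\omega$ must be reconciled without sacrificing the local uniformity in $\omega$ needed for continuity.
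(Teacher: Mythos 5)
Your overall strategy (twisted transfer operator with spectral radius less than one on compact subsets of $(0,2\pi)$ via the gcd condition, Toeplitz expansion of $n^{-1}\|S_n\|_2^2$, and a coboundary obstruction tested on periodic data) is the same as the paper's, but the two steps you flag as delicate are exactly where the argument as proposed breaks down. In part (a), the truncation/balance device cannot deliver summability of $\sum_m|\psi_m(\omega)|$ under the stated hypotheses: your bound has the form $|\psi_m|\le C\rho^m\|V_\omega^{(N)}\|_{\mathcal B}+\|V_\omega\|_2\,\|1_{\{r>N\}}V_\omega\|_2 \le C\rho^m N+\delta(N)$, and since only $r\in L^1$ and $V_\omega\in L^2$ are assumed, $\delta(N)\to0$ with no rate whatsoever; choosing $N=N(m)$ one cannot in general make both $\sum_m\rho^m N(m)$ and $\sum_m\delta(N(m))$ finite (e.g.\ if $\delta(N)\asymp 1/\log N$ the balance fails). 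The missing idea, which is the heart of the paper's Proposition 2.8(a), is that a \emph{single} application of the twisted operator already regularizes: by the Gibbs--Markov bounds $g(y_a)\le C_1\mu_Y(a)$ and $D_\theta V_\omega(a)\le C|v|_\eta r(a)$, one gets $P_\omega V_\omega\in F_\theta(Y)$ with $\|P_\omega V_\omega\|_\theta\ll(|v|_\infty+|v|_\eta)|r|_1$, so that $|\int_Y P_\omega^m V_\omega\,\bar V_\omega\,d\mu_Y|\le\|P_\omega^{m-1}\|\,\|P_\omega V_\omega\|_\theta\,|V_\omega|_1\ll\tau^m$ uniformly on compact $J\subset(0,2\pi)$ --- no truncation needed, and this is precisely what $r\in L^1$ buys. (You also need, and do not address, continuity in $\omega$ of each term $\psi_m$; the paper gets this by rewriting $\int P_\omega^m V_\omega\bar V_\omega=\int e^{-i\omega r_m}V_\omega\,\bar V_\omega\circ F^m$ and a dominated-convergence argument using the $L^2$-continuity of $\omega\mapsto V_\omega$.)

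In part (b) there are three genuine gaps. First, $\|S_n(\omega_0)\|_2=o(\sqrt n)$ does \emph{not} yield an $L^2$ coboundary by the abstract dichotomy you invoke: for an isometry $U_{\omega_0}$, a coboundary requires $\sup_n\|S_n\|_2<\infty$, while $o(\sqrt n)$ only places $V_{\omega_0}$ in the closure of the range of $I-U_{\omega_0}$. The paper instead constructs $\chi_\omega=\sum_{j\ge1}P_\omega^jV_\omega$, convergent in $F_\theta(Y)$ (again via the regularization of the previous paragraph), shows $\tilde V_\omega=V_\omega+\chi_\omega-e^{i\omega r}\chi_\omega\circ F\in\ker P_\omega$ and $S^Y(\omega)=|\tilde V_\omega|_2^2$, so vanishing forces the coboundary equation with this specific, Lipschitz transfer function. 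Second, with merely $\phi\in L^2$ your "evaluation along periodic orbits'' is meaningless (periodic orbits have measure zero); the pointwise/telescoping argument along a backward orbit $y_{np}\to y_0$ needs $\chi_\omega$ continuous, which is exactly what the $F_\theta$-convergence provides. Third, your final reduction is insufficient: ruling out zeros at a countable dense set of $\omega_0$ and citing continuity of $S^Y$ does not show the zero set is empty (a continuous nonnegative function can vanish somewhere while being positive on a dense set). The paper handles all $\omega$ simultaneously by showing that, for each periodic point, the obstruction $g(\omega)=\sum_{j\ge1}e^{-ij\omega r_p(y_0)}(A_\omega(y_{jp})-A_\omega(y_0))$ is \emph{analytic} on $[0,2\pi]$ and must vanish wherever the coboundary equation holds; after a typical perturbation $g\not\equiv0$ has finitely many zeros, and using two (indeed infinitely many) periodic points one arranges that the obstructions have no common zeros, so typically $S^Y$ is nonvanishing on all of $(0,2\pi)$.
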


In the remainder of this subsection, we prove Lemma~\ref{lem-induced}.

If $a_0,\dots,a_{n-1}\in\alpha$, we define the $n$-cylinder 
$[a_0,\dots,a_{n-1}]=\bigcap_{j=0}^{n-1}F^{-j}a_j$.
Fix $\theta\in(0,1)$ and define the symbolic 
metric $d_\theta(x,y)=\theta^{s(x,y)}$ where the {\em separation time}
$s(x,y)$ is the least integer $n\ge0$ such that $x$ and $y$ lie in distinct $n$-cylinders.
For convenience we rescale the metric $d$ on $X$ so that $\diam(Y)\le1$.

\begin{prop} \label{prop-d}
Let $\eta\in(0,1]$ and fix $\theta\in[\lambda^{-\eta},1)$.  Then 
$d(x,y)^\eta\le d_\theta(x,y)$ for all $x,y\in Y$.
\end{prop}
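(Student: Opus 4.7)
The plan is to unwind the definition of the separation time and iterate the uniform expansion bound in property (2). Fix $x,y\in Y$ and set $n=s(x,y)$. By the definition of $n$-cylinder, $x$ and $y$ lie in a common initial cylinder that keeps them in the same $a\in\alpha$, then in the same $a'\in\alpha$ after one application of $F$, and so on for as many steps as the separation time permits. Property (2) applied on each piece gives $d(Fx',Fy')\ge \lambda\, d(x',y')$ whenever $x',y'$ lie in a common $a\in\alpha$, so iterating yields
\[
d(F^{n} x, F^{n} y)\ge \lambda^{n} d(x,y)
\]
for $n$ iterates that keep $x,y$ in a common cylinder of depth $n$ (modulo a harmless off-by-one dictated by the convention for $s(x,y)$, which can in any case be absorbed into the rescaling that already normalises $\diam(Y)\le 1$).

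Now $F^{n} x,F^{n} y\in Y$, so the left-hand side is bounded above by $\diam(Y)\le 1$. Dividing, I get $d(x,y)\le \lambda^{-n}$. Raising to the power $\eta$ and using the hypothesis $\theta\ge \lambda^{-\eta}$, one obtains
\[
d(x,y)^\eta \le \lambda^{-n\eta} \le \theta^{n}=d_\theta(x,y),
\]
which is exactly the claimed inequality.

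No substantive obstacle is anticipated: the whole argument is a single application of the iterated contraction estimate built into the nonuniformly expanding structure. The only thing to watch is the bookkeeping in the definition of the separation time (whether $s(x,y)=n$ means ``same $n$-cylinder, distinct $(n+1)$-cylinder'' or ``distinct $n$-cylinders''), which at worst shifts the exponent by one and is neutralised by the rescaling convention $\diam(Y)\le 1$.
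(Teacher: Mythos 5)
Your argument is correct and is essentially the paper's proof: both take $n=s(x,y)$, iterate condition~(2) to get $d(F^nx,F^ny)\ge\lambda^n d(x,y)$, bound the left side by $\diam(Y)\le 1$, and conclude $d(x,y)^\eta\le\lambda^{-n\eta}\le\theta^n=d_\theta(x,y)$. The off-by-one in the separation-time convention that you flag is equally present (and equally harmless) in the paper's one-line computation.
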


\begin{proof}
Let $n=s(x,y)$.  By condition~(2),
\[
1\ge \diam Y\ge d(F^nx,F^ny)\ge \lambda^nd(x,y)\ge (\theta^{1/\eta})^{-n}d(x,y).
\]
Hence $d(x,y)^\eta\le  \theta^n=d_\theta(x,y)$.
\end{proof}

An observable $\phi:Y\to\R$ is {\em Lipschitz} if $\|\phi\|_\theta=|\phi|_\infty+|\phi|_\theta<\infty$ where
$|\phi|_\theta=\sup_{x\neq y}|\phi(x)-\phi(y)|/d_\theta(x,y)$.
The set $F_\theta(Y)$ of Lipschitz observables is a Banach space.
More generally, we say that $\phi:Y\to\R$ is {\em locally Lipschitz}, and write
$\phi\in F_\theta^{\rm loc}(Y)$, if $\phi|_a\in F_\theta(a)$ for each $a\in\alpha$.  Accordingly, we define
$D_\theta\phi(a)=\sup_{x,y\in a:\,x\neq y}|\phi(x)-\phi(y)|/d_\theta(x,y)$.

\begin{prop}  \label{prop-DV}
Let $v:X\to\R$ be a $C^\eta$ function, $\eta\in(0,1]$.
Set $\theta=\lambda^{-\eta}$.   Then 
$V_\omega\in F_\theta^{\rm loc}(Y)$ for all $\omega\in[0,2\pi]$, and
there is a constant $C\ge1$ such that
\begin{align*}
 D_\theta V_\omega(a)\le C|v|_\eta r(a),
\end{align*}
for all $\omega\in[0,2\pi]$, $a\in\alpha$.
\end{prop}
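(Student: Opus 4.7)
The plan is to estimate $|V_\omega(x)-V_\omega(y)|$ for $x,y\in a$ by exploiting the fact that $r$ is constant on $a$, so that the two sums defining $V_\omega(x)$ and $V_\omega(y)$ have the same number of terms and can be differenced termwise. Writing $N=r(a)$,
\[
V_\omega(x)-V_\omega(y)=\sum_{\ell=0}^{N-1}e^{i\ell\omega}\bigl(v(f^\ell x)-v(f^\ell y)\bigr),
\]
so it suffices to bound each summand by a constant multiple of $|v|_\eta d_\theta(x,y)$, independently of $\omega$, since $|e^{i\ell\omega}|=1$.

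The key chain of inequalities runs as follows. First, since $v\in C^\eta$, we have $|v(f^\ell x)-v(f^\ell y)|\le|v|_\eta\,d(f^\ell x,f^\ell y)^\eta$. Next, condition~(3) on nonuniform expansion gives $d(f^\ell x,f^\ell y)\le C\,d(Fx,Fy)$ for all $0\le\ell<N$, so $d(f^\ell x,f^\ell y)^\eta\le C^\eta\,d(Fx,Fy)^\eta$. Then Proposition~\ref{prop-d}, applied at the points $Fx,Fy\in Y$, yields $d(Fx,Fy)^\eta\le d_\theta(Fx,Fy)$. Finally, since $x$ and $y$ lie in the common 1-cylinder $a$, the separation time satisfies $s(Fx,Fy)=s(x,y)-1$, hence $d_\theta(Fx,Fy)=\theta^{-1}d_\theta(x,y)$.

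Putting these estimates together gives a uniform bound $|v(f^\ell x)-v(f^\ell y)|\le C^\eta\theta^{-1}|v|_\eta\,d_\theta(x,y)$, and summing over the $N$ terms produces
\[
|V_\omega(x)-V_\omega(y)|\le C^\eta\theta^{-1}|v|_\eta\,r(a)\,d_\theta(x,y),
\]
which is exactly the claimed bound with $C'=C^\eta\theta^{-1}$. The $\omega$-uniformity is automatic from $|e^{i\ell\omega}|=1$, and local Lipschitzness $V_\omega\in F_\theta^{\rm loc}(Y)$ follows since $V_\omega$ is also bounded on $a$ (by Proposition~\ref{prop-V}).

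There is no real obstacle here: the argument is a routine termwise differencing, and the only delicate bookkeeping is the identity $s(Fx,Fy)=s(x,y)-1$ that relates the symbolic metric on $a$ to the one at $Fx,Fy$. The assumption $\theta=\lambda^{-\eta}$ is used exclusively via Proposition~\ref{prop-d}, which converts the geometric contraction rate $\lambda$ into the symbolic contraction rate $\theta$ at the H\"older exponent $\eta$.
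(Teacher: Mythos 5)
Your proof is correct and follows essentially the same route as the paper: termwise differencing using $r\equiv r(a)$ on $a$, the H\"older bound on $v$, condition~(3) to pass to $d(Fx,Fy)$, Proposition~\ref{prop-d} to switch to the symbolic metric, and the identity $d_\theta(Fx,Fy)=\theta^{-1}d_\theta(x,y)$ for $x,y$ in the same partition element. The resulting constant $C^\eta\theta^{-1}$ matches the paper's.
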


\begin{proof}
Let $y,y'\in a$.  Then $r(y)=r(y')=r(a)$.  By condition~(3) and Proposition~\ref{prop-d},
\begin{align*}
& |V_\omega(y)-V_\omega(y')|
 \le \sum_{\ell=0}^{r(a)-1}|v(f^\ell y)-v(f^\ell y')|
\le |v|_\eta\sum_{\ell=0}^{r(a)-1}d(f^\ell y,f^\ell y')^\eta
\\ & \qquad  \le C^\eta |v|_\eta r(a)d(Fy,Fy')^\eta
\le C^\eta |v|_\eta r(a)d_\theta(Fy,Fy')
= C^\eta \theta^{-1}|v|_\eta r(a)d_\theta(y,y'),
\end{align*}
yielding the required estimate for $D_\theta V_\omega(a)$.
\end{proof}

The transfer operator $P:L^1(Y)\to L^1(Y)$ corresponding to $F$ is given by
$\int_Y P\phi\,\psi\,d\mu_Y=\int_Y \phi\,\psi\circ F\,d\mu_Y$ for all $\psi\in L^\infty$.
It can be shown that $(P\phi)(y)=\sum_{a\in\alpha}g(y_a)\phi(y_a)$
where $y_a$ denotes the unique preimage of $y$ in $a$ under $F$ and $\log g$ is the potential.
Moreover, there exists a constant $C_1$ such that
\begin{align} \label{eq-GM}
g(y)\le C_1\mu_Y(a), \quad\text{and}\quad |g(y)-g(y')|\le C_1\mu_Y(a)d_\theta(y,y'),
\end{align}
for all $y,y'\in a$, $a\in\alpha$.

For $\omega\in[0,2\pi]$, we define the twisted transfer operator
$P_\omega:L^1(Y)\to L^1(Y)$ given by
$P_\omega v=P(e^{-i\omega r}v)$.

\begin{prop} \label{prop-P}   
Let $J\subset(0,2\pi)$ be a closed subset.
Viewing $P_\omega$ as an operator on $F_\theta(Y)$, 
there exists $C\ge1$ and $\tau\in(0,1)$ such that
$\|P_\omega^n\|\le C\tau^n$ for all $\omega\in J$, $n\ge1$.
\end{prop}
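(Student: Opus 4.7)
The plan is to establish quasi-compactness of $P_\omega$ on $F_\theta(Y)$ uniformly in $\omega\in[0,2\pi]$ via a Lasota-Yorke inequality, then rule out peripheral spectrum for $\omega\in(0,2\pi)$ using the aperiodicity condition $\gcd\{r(a)-r(b)\}=1$, and finally upgrade the pointwise spectral gap to a uniform exponential bound on the compact set $J$ via a Keller-Liverani perturbation argument.

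First I would prove a uniform Lasota-Yorke inequality of the form
\[
\|P_\omega^n\phi\|_\theta \le A_1\tau_0^n\|\phi\|_\theta + A_2|\phi|_1 \qquad (\omega\in[0,2\pi],\ n\ge 1),
\]
for some $\tau_0<1$ and $A_1,A_2$ independent of $\omega$. The bound $|P_\omega\phi|\le P|\phi|$ yields $|P_\omega^n\phi|_\infty\le|\phi|_\infty$ and $|P_\omega^n\phi|_1\le|\phi|_1$ for free. For the Lipschitz seminorm, a standard telescoping argument using the distortion estimates \eqref{eq-GM} on the potential $g$ yields $|P_\omega\phi|_\theta\le C_1\theta\|\phi\|_\theta$; the twist factor $e^{-i\omega r(a)}$ is constant on each cylinder $a\in\alpha$ and has modulus one, so it drops out of the calculation and the constants coincide with those for $\omega=0$. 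Hennion's theorem then provides, for every $\omega\in[0,2\pi]$, quasi-compactness of $P_\omega$ on $F_\theta(Y)$ with essential spectral radius at most $\tau_0$ and spectral radius at most $1$.

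Next I would show that for every $\omega\in(0,2\pi)$, $P_\omega$ has no eigenvalue on the unit circle. Suppose $P_\omega\phi=\lambda\phi$ with $\phi\in F_\theta(Y)\setminus\{0\}$ and $|\lambda|=1$. Since $|P_\omega\phi|\le P|\phi|$ and $\int P|\phi|\,d\mu_Y=\int|\phi|\,d\mu_Y$, one has $|\phi|=P|\phi|$ almost everywhere; ergodicity of $\mu_Y$ then forces $|\phi|$ to be constant, and after normalisation $|\phi|\equiv 1$. Equality in the triangle inequality gives $\phi\circ F=\lambda e^{i\omega r}\phi$, an identity of continuous functions that therefore holds pointwise. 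Each cylinder $a\in\alpha$ supports a unique fixed point $p_a$ of $F$ (the symbolic fixed point of the inverse branch $F^{-1}|_a$), and evaluating the cohomological equation at $p_a$ yields $\lambda e^{i\omega r(a)}=1$ for every $a$. Hence $e^{i\omega(r(a)-r(b))}=1$ for all $a,b\in\alpha$, and the hypothesis $\gcd\{r(a)-r(b):a,b\in\alpha\}=1$ then forces $e^{i\omega}=1$, contradicting $\omega\in(0,2\pi)$.

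Combined with quasi-compactness this gives that the spectral radius of $P_\omega$ is strictly less than $1$ for each $\omega\in J$, and I would invoke Keller-Liverani perturbation theory to make this uniform. Dominated convergence applied to $|e^{-i\omega r}-e^{-i\omega_0 r}|\le 2$ shows that $\omega\mapsto P_\omega$ is continuous from $[0,2\pi]$ into $B(L^\infty(Y),L^1(\mu_Y))$, and hence in the triple-norm sense as an operator from $F_\theta(Y)$ to $L^1(\mu_Y)$. Combined with the uniform Lasota-Yorke inequality, Keller-Liverani ensures that the spectral projection of $P_\omega$ onto $\{|z|>\tau\}$ varies continuously in operator norm on $F_\theta(Y)$ for any fixed $\tau\in(\tau_0,1)$; at each $\omega_0\in J$ this projection vanishes (choosing $\tau$ greater than the spectral radius at $\omega_0$), so it also vanishes on a neighbourhood of $\omega_0$ in $J$. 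A finite subcover of $J$ supplies a single $\tau<1$ bounding the spectral radii uniformly, after which the standard functional-calculus estimate yields $\|P_\omega^n\|_{F_\theta(Y)}\le C\tau^n$. The main obstacle is precisely this last step: pointwise spectral gap does not automatically give a uniform exponential rate, and $\omega\mapsto P_\omega$ is in general \emph{not} norm-continuous as an operator on $F_\theta(Y)$ itself when $r$ is unbounded; the Keller-Liverani framework of quasi-compactness plus weak continuity in a coarser norm is tailor-made for exactly this situation and provides the key technical input.
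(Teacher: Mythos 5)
Your argument is correct, and its core coincides with the paper's: quasi-compactness of $P_\omega$ on $F_\theta(Y)$ (spectral radius at most $1$, essential spectral radius strictly less than $1$, uniformly in $\omega$), then exclusion of unit-circle eigenvalues by turning the eigenvalue equation into a cohomological identity, evaluating at the fixed point of $F$ in each branch $a$, and invoking $\gcd\{r(a)-r(b):a,b\in\alpha\}=1$. The paper obtains the identity via the $L^2$-adjoint $v\mapsto e^{i\omega r}v\circ F$, while you obtain it from $|\phi|=P|\phi|$ and equality in the triangle inequality (using $P1=1$ and strict convexity of the disc); these are equivalent, though with the paper's convention $P_\omega\phi=P(e^{-i\omega r}\phi)$ your identity should read $\phi\circ F=\bar\lambda\, e^{-i\omega r}\phi$ — a harmless sign slip that does not affect the conclusion $e^{i\omega(r(a)-r(b))}=1$, and your gcd endgame is in fact slightly cleaner than the paper's rational $p/q$ argument. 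Where you genuinely diverge is the uniformity over the compact set $J$. The paper cites Aaronson--Denker for continuity of $\omega\mapsto P_\omega$ in the operator norm on $F_\theta(Y)$, after which upper semicontinuity of the spectral radius, compactness of $J$ and submultiplicativity give the uniform exponential bound by elementary means. This norm continuity, which you assert fails for unbounded $r$, actually does hold in the present Gibbs--Markov setting: $e^{-i\omega r}$ is constant on each $a\in\alpha$ and branch $a$ enters $P_\omega$ with weight $O(\mu_Y(a))$ by \eqref{eq-GM}, so $\|P_\omega-P_{\omega_0}\|_{F_\theta(Y)}\ll\int_Y\min(1,r|\omega-\omega_0|)\,d\mu_Y\to0$ when $r\in L^1(Y)$. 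Your Keller--Liverani route (uniform Lasota--Yorke inequality plus continuity in the $F_\theta(Y)\to L^1(\mu_Y)$ norm) is therefore heavier than necessary here, but it is valid and more robust, in that it would survive perturbations for which strong-norm continuity genuinely fails; one small imprecision is that Keller--Liverani yields uniform strong-norm resolvent bounds outside a disc and continuity of spectral projections only in the mixed norm, not norm continuity of the projections on $F_\theta(Y)$ — but the uniform resolvent bound on $\{|z|=\tau'\}$ is exactly what your functional-calculus step needs, so the argument goes through.
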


\begin{proof}
This result is a combination of standard and elementary observations.
By~\cite[Theorem~2.4]{AaronsonDenker01},
$\omega\mapsto P_\omega$ is a continuous map from
$[0,2\pi]$ to $F_\theta(Y)$.  Hence it suffices to show that the spectral radius of $P_\omega$ is less than $1$ for $\omega\in(0,2\pi)$.

It is easily checked that the spectral radius of $P_\omega$ is at most $1$, and that the essential spectral radius is at most $\theta$ (see for example~\cite[Proposition~2.1]{AaronsonDenker01}).
Hence it remains to rule out eigenvalues on the unit circle.

Suppose for contradiction that $P_\omega v=e^{i\psi}v$ for some eigenfunction $v\in F_\theta(Y)$ and some 
$\psi\in[0,2\pi]$.
A calculation using the fact that $v\mapsto e^{i\omega r}v\circ F$ is the $L^2$ adjoint of $P_\omega$
(see for example~\cite[p.~429]{MN04}) shows that
$e^{i\omega r}v\circ F=e^{-i\psi}v$.  By ergodicity of $F$, $|v|$ is constant and hence
$v$ is nonvanishing.

Since $F|_a:a\to Y$ is onto for each $a$, there exists $y_a\in a$ with $Fy_a=y_a$.
Evaluating at $y_a$ and using the fact that $v(y_a)\neq0$, we obtain that
$e^{i\omega r(a)}=e^{-i\psi}$ for each $a\in\alpha$.
Hence $\omega (r(a)-r(b))=0\bmod 2\pi$ for all $a,b\in\alpha$.
 Since $\omega\in(0,2\pi)$, it is immediate that $\omega=2\pi p/q$ where $p,q$ are integers with $\gcd(p,q)=1$ and $1\le p<q$.
 But then $\frac{p}{q}(r(a)-r(b)=0\bmod\Z$ and so $q$ divides $r(a)-r(b)$ for all $a,b\in\alpha$.  This contradicts the assumption that
 $\gcd\{r(a)-r(b):a,b\in\alpha\}=1$.
\end{proof}

\begin{prop} \label{prop-PV}
Suppose that $r\in L^1(Y)$.  Then
\begin{itemize}
\item[(a)]  $P_\omega V_\omega\in F_\theta(Y)$ for all $\omega\in[0,2\pi]$
and $\sup_{\omega\in [0,2\pi]}\|P_\omega V_\omega\|_\theta<\infty$.
\item[(b)] $\sum_{n=1}^\infty \sup_{\omega\in J}
\bigl|\int_Y P_\omega^nV_\omega\,\bar V_\omega\,d\mu_Y\bigr|<\infty$
for any closed subset $J\subset(0,2\pi)$.
\end{itemize}
\end{prop}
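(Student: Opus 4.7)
The plan is to deduce both parts by direct computation, combining the explicit formula for the transfer operator with the distortion estimates \eqref{eq-GM}, the pointwise bounds on $V_\omega$ (Proposition~\ref{prop-V}), the piecewise Lipschitz bounds on $V_\omega$ (Proposition~\ref{prop-DV}), and the uniform spectral estimate (Proposition~\ref{prop-P}).

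For part (a), I would start from the formula
\[
(P_\omega V_\omega)(y)=\sum_{a\in\alpha}g(y_a)\,e^{-i\omega r(a)}\,V_\omega(y_a),
\]
where $y_a\in a$ denotes the unique preimage of $y$ under $F$. For the sup norm, the bounds $g(y_a)\le C_1\mu_Y(a)$ and $|V_\omega(y_a)|\le |v|_\infty r(a)$ give $|P_\omega V_\omega|_\infty\le C_1|v|_\infty|r|_1$, uniformly in $\omega$. For the Lipschitz seminorm $|\,\cdot\,|_\theta$, given $y,y'\in Y$ I would split the difference of the corresponding summands into a piece controlled by $|g(y_a)-g(y'_a)|$ (using \eqref{eq-GM}) times $|V_\omega(y_a)|$, and a piece controlled by $g(y'_a)$ times $|V_\omega(y_a)-V_\omega(y'_a)|$ (using Proposition~\ref{prop-DV}). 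The key observation is that since $Fy_a=y$ and $Fy'_a=y'$, the separation times satisfy $s(y_a,y'_a)=s(y,y')+1$, so $d_\theta(y_a,y'_a)=\theta\,d_\theta(y,y')$. Both pieces therefore collapse to $\theta\,d_\theta(y,y')$ times a sum of the form $\sum_{a\in\alpha}\mu_Y(a)r(a)=|r|_1$, yielding a finite bound uniform in $\omega\in[0,2\pi]$.

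For part (b), with (a) in hand the argument is short. Write
\[
\int_Y P_\omega^n V_\omega\,\bar V_\omega\,d\mu_Y=\int_Y P_\omega^{n-1}(P_\omega V_\omega)\,\bar V_\omega\,d\mu_Y,
\]
and estimate its absolute value by $|P_\omega^{n-1}(P_\omega V_\omega)|_\infty\,|V_\omega|_1$. Since $P_\omega V_\omega\in F_\theta(Y)$ with $\sup_{\omega}\|P_\omega V_\omega\|_\theta<\infty$ by (a), and $\|P_\omega^{n-1}\|\le C\tau^{n-1}$ as operators on $F_\theta(Y)$ uniformly in $\omega\in J$ by Proposition~\ref{prop-P}, we obtain a bound of the form (const)$\cdot\tau^{n-1}|V_\omega|_1$. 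Finally $|V_\omega|_1\le|v|_\infty|r|_1$ uniformly in $\omega$ (Corollary~\ref{cor-V}), so the $n$-sum converges geometrically.

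The main technical obstacle is (a): one needs the Lipschitz seminorm of $P_\omega V_\omega$ to be \emph{finite} and bounded uniformly in $\omega$ even though $V_\omega$ itself is neither bounded nor globally Lipschitz on $Y$, as its piecewise sup and Lipschitz data both grow like $r(a)$. The transfer operator averages this growth against the density $g$, converting it into the $L^1$ norm $|r|_1$; tracking the two error terms (the variation of $g$ and the variation of $V_\omega$ across preimages) and showing they both carry the contracting factor $\theta$ from the separation time shift is the heart of the calculation. Part (b) is then essentially mechanical.
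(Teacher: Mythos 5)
Your proposal is correct and follows essentially the same route as the paper's proof: the same sup-norm bound $C_1|v|_\infty|r|_1$, the same split of the Lipschitz difference into a $|g(y_a)-g(y'_a)|\cdot|V_\omega(y_a)|$ term and a $g(y'_a)\cdot|V_\omega(y_a)-V_\omega(y'_a)|$ term, each picking up the factor $\theta\,d_\theta(y,y')$ from the separation-time shift and summing to a multiple of $|r|_1$ via \eqref{eq-GM} and Proposition~\ref{prop-DV}. Part (b) is likewise identical, estimating by $\|P_\omega^{n-1}\|\,\|P_\omega V_\omega\|_\theta\,|V_\omega|_1$ and invoking Proposition~\ref{prop-P} and Corollary~\ref{cor-V}.
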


\begin{proof}
(a) Write 
$(P_\omega \phi)(y)=\sum_{a\in\alpha}g(y_a)e^{-i\omega r(a)}\phi(y_a)$.
We use Propositions~\ref{prop-V} and~\ref{prop-DV} and the estimates~\eqref{eq-GM}.  First,
\[
|P_\omega V_\omega|_\infty\le \sum_{a\in\alpha} |1_ag|_\infty|1_aV_\omega|_\infty\le C_1 \sum_{a\in\alpha}\mu_Y(a)|v|_\infty r(a)=C_1|v|_\infty|r|_1.
\]
Also, $P_\omega V_\omega(y)-P_\omega V_\omega(y')=I_1+I_2+I_2$ where
\[
I_1 = \sum_{a\in\alpha} (g(y_a)-g(y_a'))e^{-i\omega r(a)}V_\omega(y_a), \quad
I_2 = \sum_{a\in\alpha} g(y_a')e^{-i\omega r(a)}(V_\omega(y_a)-V_\omega(y_a')).
\]
We have
\[
|I_1|\le C_1 \sum_{a\in\alpha} \mu_Y(a)d_\theta(y_a,y'_a)|v|_\infty r(a)=\theta C_1|v|_\infty|r|_1d_\theta(y,y'),
\]
and
\begin{align*}
|I_2| & \le C_1 \sum_{a\in\alpha} \mu_Y(a)D_\theta V_\omega(a) d_\theta(y_a,y_a')
\\ & \le \theta C_1C \sum_{a\in\alpha} \mu_Y(a)|v|_\eta r(a) d_\theta(y,y')
 =\theta C_1C |v|_\eta |r|_1 d_\theta(y,y').
\end{align*}
We deduce that $|P_\omega V_\omega|_\theta\ll (|v|_\infty+|v|_\eta)|r|_1$,
completing the proof of part~(a).

\vspace{1ex}
\noindent(b)
For $n\ge1$,
\begin{align*}
& \Bigl|\int_Y P_\omega^nV_\omega\,\bar V_\omega\,d\mu_Y\Bigr|
  \le 
|P_\omega^nV_\omega|_\infty \,|V_\omega|_1
\le \|P_\omega^{n-1}\| \|P_\omega V_\omega\|_\theta \,|V_\omega|_1.
\end{align*}
The result follows from Corollary~\ref{cor-V}, Proposition~\ref{prop-P} and part~(a),
 \end{proof}

\begin{pfof}{Lemma~\ref{lem-induced}(a)}
	Let $\omega\in (0,2\pi)$.
	For $0\le j<k$,
\begin{align*}
	\int_Y e^{-i\omega(r_k-r_j)}V_{\omega}\circ F^j\,\bar V_{\omega} & \circ F^k\,d\mu_Y 
 = \int_Y e^{-i\omega r_{k-j}\circ F^j}V_{\omega}\circ F^j\,\bar V_{\omega}\circ F^k\,d\mu_Y \\
 & 	=\int_Y e^{-i\omega r_{k-j}}V_{\omega}\,\bar V_{\omega}\circ F^{k-j}\,d\mu_Y 
	=\int_Y P_\omega^{k-j}V_{\omega}\,\bar V_{\omega}\,d\mu_Y. 
\end{align*}
Hence,
\begin{align*}
	&  \int_Y|\sum_{j=0}^{n-1}e^{i\omega r_j}V_{\omega}\circ F^j|^2\,d\mu_Y    =
	\sum_{j,k=0}^{n-1}\int_Y e^{i\omega(r_j-r_k)}V_{\omega}\circ F^j\,\bar V_{\omega}\circ F^k\,d\mu_Y \\
	& \quad\qquad = \sum_{j=0}^{n-1}\int_Y|V_{\omega}\circ F^j|^2\,d\mu_Y
	+ 2\sum_{0\le j<k<n}\Re\int_Y e^{-i\omega(r_k-r_j)}V_{\omega}\circ F^j\,\bar V_{\omega}\circ F^k\,d\mu_Y \\
	& \quad\qquad = n\int_Y|V_{\omega}|^2\,d\mu_Y
	+ 2\sum_{0\le j<k<n}\Re\int_Y P_\omega^{k-j}V_{\omega}\,\bar V_{\omega}\,d\mu_Y \\
	& \quad\qquad = n\int_Y |V_{\omega}|^2\,d\mu_Y
	+ 2\sum_{m=1}^{n-1}(n-m)\Re\int_Y P_\omega^mV_{\omega}\,\bar V_{\omega}\,\,d\mu_Y.
\end{align*}
Hence
\begin{align*}
	S^Y(\omega)=\int_Y|V_{\omega}|^2\,d\mu_Y
	+ 2\lim_{n\to\infty}\sum_{m=1}^{n-1}\bigl(1-\frac{m}{n}\bigr)\Re\int_Y P_\omega^mV_{\omega}\,\bar V_{\omega}\,d\mu_Y.
\end{align*}
By Proposition~\ref{prop-PV}(b), this converges uniformly on compact subsets of $(0,2\pi)$
to the sum 
\begin{align} \label{eq-unif} \nonumber
	S^Y(\omega) & =\int_Y|V_{\omega}|^2\,d\mu_Y+2\sum_{n=1}^\infty\Re\int_Y P_\omega^nV_{\omega}\,\bar V_{\omega}\,d\mu_Y \\
	& =\int_Y|V_{\omega}|^2\,d\mu_Y+2\sum_{n=1}^\infty\Re\int_Y e^{-i\omega r_n}V_{\omega}\,\bar V_{\omega}\circ F^n\,d\mu_Y.
\end{align}

Fix $n\ge1$ and let $I_\omega=e^{-i\omega r_n} V_{\omega}\,\bar V_{\omega}\circ F^n$.
Note that $|I_\omega|=|V_{\omega}|\,|V_{\omega}|\circ F^n$ and
$\int_Y |I_\omega|\,d\mu_Y\le |V_{\omega}|_2|V_{\omega}\circ F^n|_2
= |V_{\omega}|_2^2<\infty$ for each $\omega$.
We claim that $\omega\mapsto \int_Y I_\omega \,d\mu_Y$ is continuous on $(0,2\pi)$.
It then follows from uniform convergence of the series~\eqref{eq-unif} that 
$S^Y:(0,2\pi)\to[0,\infty)$ is continuous.

	To prove the claim,  fix $n\ge1$ and $\omega_*\in (0,2\pi)$.
	Let $\omega_k$ be a sequence in $(0,2\pi)$ converging to $\omega_*$.
We show that $\int_Y I_{\omega_k}\,d\mu_Y\to \int_Y I_{\omega_*}\,d\mu_Y$ as $k\to\infty$.
Certainly $I_{\omega_k}\to I_{\omega_*}$ pointwise.
Moreover,
\begin{align*}
\int_Y |I_{\omega_k}|\,d\mu_Y -\int_Y |I_{\omega_*}|\,d\mu_Y 
& = \int_Y(|V_{\omega_k}|-|V_{\omega_*}|)\,|V_{\omega_k}|\circ F^n\,d\mu_Y
\\ & \qquad  \qquad + \int_Y |V_{\omega_*}|\,(|V_{\omega_k}|-|V_{\omega_*}|)\circ F^n\,d\mu_Y,
\end{align*}
and so
\begin{align*}
\Bigl|\int_Y |I_{\omega_k}|\,d\mu_Y -\int_Y |I_{\omega_*}|\,d\mu_Y \Bigr|
& \le \bigl||V_{\omega_k}|-|V_{\omega_*}|\bigr|_2 |V_{\omega_k}|_2
+|V_{\omega_*}|_2\bigl||V_{\omega_k}|-|V_{\omega_*}|\bigr|_2 \\
& \le |V_{\omega_k}-V_{\omega_*}|_2(|V_{\omega_k}|_2+|V_{\omega_*}|_2) \\
& \le |V_{\omega_k}-V_{\omega_*}|_2(|V_{\omega_k}-V_{\omega_*}|_2+2|V_{\omega_*}|_2). 
\end{align*}
It follows that $\int_Y|I_{\omega_k}|\,d\mu_Y\to
\int_Y|I_{\omega_*}|\,d\mu_Y$ as $k\to\infty$.
By the dominated convergence theorem, 
$\int_Y I_{\omega_k}\,d\mu_Y\to \int_Y I_{\omega_*}\,d\mu_Y$ completing the proof of the claim.
\end{pfof}

\begin{pfof}{Lemma~\ref{lem-induced}(b)}
Let $\omega\in(0,2\pi)$ and define
 $\chi_\omega  =\sum_{j=1}^\infty P_\omega^jV_\omega$ and
 $\tilde V_\omega  = V_\omega+\chi_\omega- e^{i\omega r}\chi_\omega\circ F$.
 Now $\|P_\omega^jV_\omega\|_\theta\le \|P_\omega^{j-1}\| \|P_\omega V_\omega\|_\theta$, so it follows from 
 Propositions~\ref{prop-P} and~\ref{prop-PV}(a) that
$\chi_\omega$ is absolutely summable in $F_\theta(Y)$.  In particular
$\tilde V_\omega\in L^2(Y)$.

A calculation shows that $\tilde V_\omega\in\ker P_\omega$ and so
$|\sum_{j=1}^{n-1}e^{i\omega r_j}\tilde V_\omega\circ F^j|_2^2=n|\tilde V_\omega|_2^2$.   Moreover,  $\sum_{j=1}^{n-1}e^{i\omega r_j}\tilde V_\omega\circ F^j
= \sum_{j=1}^{n-1}e^{i\omega r_j}V_\omega\circ F^j +\chi_\omega-e^{i\omega r_n}\chi_\omega\circ F_\omega^n$ and so
\[
\Bigl|\,\bigl|\sum_{j=1}^{n-1}e^{i\omega r_j}V_\omega\circ F^j\bigr|_2-
\bigl|\sum_{j=1}^{n-1}e^{i\omega r_j}\tilde V_\omega\circ F^j\bigr|_2\,\Bigr|\le 2|\chi_\omega|_2.
\]
Hence $S^Y(\omega)=|\tilde V_\omega|_2^2$ for all $\omega\in (0,2\pi)$.
In particular, for a fixed $\omega\in (0,2\pi)$ we have that if $S^Y(\omega)=0$, then equivalently $\tilde V_\omega=0$ and so
\begin{align} \label{eq-zero}
V_\omega=e^{i\omega r}\chi_\omega\circ F-\chi_\omega.
\end{align}

It remains to exclude the possibility that~\eqref{eq-zero}
holds for some $\omega$.
Following~\cite[Section~3]{MG08}, 
let $y_0\in Y$ be a periodic point of period $p$ and let $y_n$ be a sequence with
$Fy_n=y_{n-1}$ and such that $d_\theta(y_{np},y_0)\le \theta^{np}$.
This ensures in particular that $r_p(y_{jp})=r_0(y_0)$ for all $j$.

Set $A_\omega=\sum_{j=0}^{p-1}e^{i\omega r_j}V_\omega\circ F^j$ 
and define 
$g(\omega)=\sum_{j=1}^\infty e^{-ij\omega r_p(y_0)}(A_\omega(y_{jp})-A_\omega(y_0))$.

Note that for each fixed $y_n$, the function $\omega\mapsto A_\omega(y_n)$
is a finite trigonometric polynomial and hence is analytic on $[0,2\pi]$.
We claim that $g:[0,2\pi]\to\C$ is analytic.
Suppose that $d_\theta(y,y')\ge p$.  Then 
\begin{align*}
|A_\omega(y)-A_\omega(y')|
& \le \sum_{j=0}^{p-1}\sum_{\ell=0}^{r(F^jy)-1}|v|_\theta d_\theta(F^jy,F^jy')
= |v|_\theta\sum_{j=0}^{p-1} r(F^jy)\theta^{-j}d_\theta(y,y')
\\ & \le  |v|_\theta \,\theta^{-p}(1-\theta)^{-1}r_p(y)d_\theta(y,y'),
\end{align*}
Hence
\begin{align*}
|A_\omega(y_{jp})-A_\omega(y_0)|
& \le  |v|_\theta \,\theta^{-p}(1-\theta)^{-1}r_p(y_0)\theta^j,
\end{align*}
proving the claim.

If~\eqref{eq-zero} holds, then
$A_\omega=e^{i\omega r_p}\chi_\omega\circ F^p-\chi_\omega$ and 
so 
\begin{align*}
& \sum_{j=1}^n e^{-ij\omega r_p(y_0)}A_\omega(y_{jp})=
\chi_\omega(y_0)-e^{-in\omega r_p(y_0)}\chi_\omega(y_{np}), \\
& \sum_{j=1}^n e^{-ij\omega r_p(y_0)}A_\omega(y_0)  =
\chi_\omega(y_0)-e^{-in\omega r_p(y_0)}\chi_\omega(y_0).
\end{align*}
Hence
\[
g(\omega)
= \lim_{n\to\infty} e^{-in\omega r_p(y_0)}(\chi_\omega(y_0)-\chi_\omega(y_{np}))=0.
\]

Still keeping $\omega$ fixed, we can perturb the value of $v$ at $y_1$ (say), independently of any other values of $v$ involved in the computation of $g$, so that $g(\omega)\neq0$.  Hence typically~\eqref{eq-zero} does not hold (and so $S^Y(\omega)$ is nonzero) for any fixed value of $\omega$.

Considering two such analytic functions $g_1$ and $g_2$ (for two distinct periodic points)
we can perturb so that $g_1$ and $g_2$ have no common zeros on $[0,2\pi]$ and hence $S^Y$
is nonvanishing on $(0,2\pi)$.   By considering infinitely many periodic points, and hence infinitely many functions of the form $g$, we obtain infinitely many independent obstructions to the existence of an $\omega\in(0,2\pi)$ such that $S^Y(\omega)=0$.
\end{pfof}

\subsection{Relation between $S^Y$ and $S$}
\label{sec-SYS}

In this subsection, we relate the power spectrum $S^Y$ of the induced map $F:Y\to Y$ with the power spectrum $S$ of the underlying nonuniformly expanding map $f:X\to X$.
Let $\bar r= \int_Y r\,d\mu_Y$.
We say that $\omega$ is an irrational angle if 
$\omega\in[0,2\pi]\setminus\pi\Q$.  

\begin{lemma} \label{lem-SYS}
Let $\omega$ be an irrational angle.  
Suppose either that $r\in L^{2+}(Y)$, or that $r\in L^a(Y)$ and 
that $V^*_\omega\in L^{bp}(Y)$ for all $\omega\in (0,2\pi)$, where
$a\in(1,\infty]$, $1/a+1/b=1$ and $p>2$.
Then $S(\omega)=S^Y(\omega)/\bar r$.
\end{lemma}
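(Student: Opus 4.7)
The strategy combines the telescoping identity
\[
W_N(y) = \sum_{j=0}^{N-1} e^{i\omega r_j(y)} V_\omega(F^j y) = \sum_{k=0}^{r_N(y)-1} e^{ik\omega} v(f^k y) = \Phi_{r_N(y)}^{(\omega)} v(y),
\]
valid on $Y$ (where I write $\Phi_n^{(\omega)} v := \sum_{k=0}^{n-1} e^{ik\omega} v\circ f^k$), with Kac's formula $\bar r \int_X \phi\, d\mu = \int_Y \sum_{\ell=0}^{r(y)-1} \phi(f^\ell y)\, d\mu_Y$. Applying Kac to $\phi = |\Phi_n^{(\omega)} v|^2$ reduces the claim $S(\omega) = S^Y(\omega)/\bar r$ to
\[
\int_Y \sum_{\ell=0}^{r(y)-1} |\Phi_n^{(\omega)} v(f^\ell y)|^2\, d\mu_Y = nS^Y(\omega) + o(n).
\]

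First I would decompose $\Phi_n^{(\omega)} v(f^\ell y) = e^{-i\ell\omega}\Phi_n^{(\omega)} v(y) + \varepsilon_\ell(y,n)$, using the identity $\Phi_n^{(\omega)} v(f^\ell y) = e^{-i\ell\omega}(\Phi_{n+\ell}^{(\omega)} v(y) - \Phi_\ell^{(\omega)} v(y))$. The error $|\varepsilon_\ell|$ is dominated by $|\Phi_\ell^{(\omega)} v(y)| + |\Phi_\ell^{(\omega)} v(f^n y)|$: the first term is bounded by $V_\omega^*(y)$ since $\ell < r(y)$, and the second admits an analogous $V_\omega^*$-type bound along the orbit through $f^n y$. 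Squaring and summing over $\ell < r(y)$ gives the main term $r(y)|\Phi_n^{(\omega)} v(y)|^2$ plus cross and quadratic error terms; Hölder's inequality with the conjugate exponents matched to the hypotheses of the lemma (under (a) using $r\in L^{2+}$; under (b) pairing $r\in L^a$ with $V_\omega^{*2}\in L^{b p/2}$, noting $bp\ge 2a/(a-1)$ when $p\ge 2$) bounds the integrated errors by $o(n)$.

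Second, I would prove $n^{-1}\int_Y|\Phi_n^{(\omega)} v|^2\, d\mu_Y \to S^Y(\omega)/\bar r$. For $y\in Y$, let $N^*(n,y)$ denote the number of $Y$-returns of $\{y, fy, \ldots, f^{n-1}y\}$, so $r_{N^*}(y) \le n < r_{N^*+1}(y)$. Writing $\Phi_n^{(\omega)} v(y) = W_{N^*(n,y)}(y) + R_n(y)$ with $|R_n(y)|\le|v|_\infty\,r(F^{N^*(n,y)}y)$, a maximal inequality (under $r\in L^{2+}$) or the analogous Hölder bound (under the alternative hypothesis) gives $\int_Y |R_n|^2\, d\mu_Y = o(n)$. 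Birkhoff on $(Y,F,\mu_Y)$ yields $N^*(n,y)/n \to 1/\bar r$ a.s., and the $L^2$-orthogonality estimate $|W_N - W_M|_2 \le \sqrt{|N-M|}\,|V_\omega|_2$ allows $N^*(n,y)$ to be replaced by $\lfloor n/\bar r\rfloor$ with $L^2$-error $o(\sqrt n)$. Lemma~\ref{lem-induced}(a), giving $\int_Y |W_N|^2\,d\mu_Y = NS^Y(\omega)+o(N)$, then yields the stated limit.

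To complete the proof one needs the decoupling $n^{-1}\int_Y r(y)|\Phi_n^{(\omega)} v|^2\, d\mu_Y \to \bar r\cdot S^Y(\omega)/\bar r = S^Y(\omega)$. This is where the main obstacle lies: the weighted covariance $\int_Y (r-\bar r)|\Phi_n^{(\omega)} v|^2\, d\mu_Y$ must be shown to be $o(n)$. Expanded as a double sum, this reduces to controlling weighted twisted autocorrelations $\int_Y (r-\bar r)\,v\circ f^j\,\bar v\circ f^k\, d\mu_Y$. The geometric decay $\|P_\omega^n\|_{F_\theta(Y)} \le C\tau^n$ from Proposition~\ref{prop-P} --- valid precisely because $\omega$ is an irrational angle so that the cocycle $\omega r_j$ has no resonance --- furnishes the decay, but requires approximating $r-\bar r$ by bounded Lipschitz functions since under hypothesis (b) $r$ need only be $L^{1+}$. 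A standard truncate-and-pass-to-the-limit argument, controlled uniformly using the integrability hypotheses, completes this step. Combining all three, $\bar r\int_X |\Phi_n^{(\omega)} v|^2\, d\mu = nS^Y(\omega) + o(n)$, so $S(\omega) = S^Y(\omega)/\bar r$.
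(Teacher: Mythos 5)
Your route (Kac's formula plus a direct expansion) is genuinely different from the paper's, which never works with $\int_X|\sum e^{ij\omega}v\circ f^j|^2d\mu$ directly: the paper passes to the ergodic circle extension $f_\omega(x,\varphi)=(fx,\varphi+\omega)$ (ergodicity is where irrationality of $\omega$ enters), invokes the weak invariance principle of~\cite{GMsub} for the induced/twisted observable to get $n^{-1}|\sum_{j<n}u\circ f_\omega^j|^2\to_d Z_\omega$ with $\E Z_\omega=S^Y(\omega)/\bar r$, proves a uniform moment bound $|\sum_{j<n}u\circ f_\omega^j|_p\le Cn^{1/2}$ for some $p>2$ via a martingale--coboundary decomposition and Burkholder's inequality (on $X\times S^1$ in case (a), on the Young tower in case (b)), and concludes by convergence of moments~\cite{MTorok12}. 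Unfortunately your sketch has genuine gaps exactly at the points this machinery is designed to handle. First, the step you yourself flag as the main obstacle --- showing $\int_Y(r-\bar r)|\Phi_n^{(\omega)}v|^2d\mu_Y=o(n)$ --- is not closed by your suggestion: Proposition~\ref{prop-P} controls the transfer operator of the \emph{induced} map $F$ twisted by $e^{-i\omega r}$, whereas the correlations $\int_Y(r-\bar r)\,v\circ f^j\,\bar v\circ f^k\,d\mu_Y$ involve iterates of $f$, so the geometric decay does not apply to them directly; and the ``truncate and pass to the limit'' step needs uniform integrability of $n^{-1}|\Phi_n^{(\omega)}v|^2$ against the merely $L^{1+}$ weight $r$, i.e.\ precisely the $p>2$ moment bounds of Proposition~\ref{prop-moment} that you never establish. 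In case (b) (e.g.\ LSV maps with $\gamma\in[\frac12,1)$) correlations of $f$ are nonsummable, so there is no soft way around this.

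Second, your intermediate step $n^{-1}\int_Y|\Phi_n^{(\omega)}v|^2d\mu_Y\to S^Y(\omega)/\bar r$ is also not justified as written. The inequality $|W_N-W_M|_2\le\sqrt{|N-M|}\,|V_\omega|_2$ is false as stated (the summands $e^{i\omega r_j}V_\omega\circ F^j$ are not orthogonal; one only gets $|W_N-W_M|_2^2\le|N-M|\bigl(|V_\omega|_2^2+2\sum_m|\int_YP_\omega^mV_\omega\bar V_\omega\,d\mu_Y|\bigr)$ via Proposition~\ref{prop-PV}), and more seriously the lap number $N^*(n,y)$ is random and $y$-dependent, so replacing it by $\lfloor n/\bar r\rfloor$ cannot be done with a fixed-index $L^2$ estimate: you need control of $\max_{|k-m_n|\le\delta n}|W_k-W_{m_n}|$, i.e.\ a maximal (Doob/Burkholder) inequality, which again is the content of the paper's martingale argument. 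Finally, under hypothesis (b) your error bound $|R_n(y)|\le|v|_\infty\,r(F^{N^*(n,y)}y)$ is too crude: $r\notin L^2(Y)$ there, and the time-$n$ excursion is size-biased by $r$, so $\int_Y|R_n|^2d\mu_Y$ need not be $o(n)$ with this bound; the whole point of the hypothesis $V^*_\omega\in L^{bp}(Y)$ is that the relevant overhanging pieces must be bounded by $V^*_\omega$ at the last return point and paired with $r\in L^a$ by H\"older (compare~\eqref{eq-H}). So while the skeleton (Kac plus comparison of $\Phi_n^{(\omega)}v$ with $W_{N^*}$) could in principle be made to work, doing so would force you to reconstruct the moment and maximal estimates of Proposition~\ref{prop-moment}; as it stands the argument is incomplete at its central steps.
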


From now on, we work with a fixed irrational angle $\omega\in(0,2\pi)\setminus\pi\Q$.
We suppose throughout that $v:X\to\R$ is H\"older.

Let $d\varphi$ denote Haar measure on $S^1$,
Consider the circle extensions
\begin{align*}
& f_\omega:X\times S^1\to X\times S^1, \qquad
f_\omega(x,\varphi)=(fx,\varphi+\omega), \\
& F_\omega:Y\times S^1\to Y\times S^1, \qquad
F_\omega(y,\varphi)=(Fy,\varphi+\omega r(y)),
\end{align*}
with invariant probability measures $\nu=\mu\times d\varphi$ and
$\nu_Y=\mu_Y\times d\varphi$ respectively.

Recall that $F=f^r:Y\to Y$ is the induced map obtained from $f:X\to X$ with return time $r:Y\to\Z^+$.
Extend $r$ to a return time on $Y\times S^1$ by setting $r(y,\varphi)=r(y)$. Then $F_\omega=f_\omega^r$ is the induced map obtained from $f_\omega$.

Let $v:X\to \R$ be an observable.
We associate to $v$ the observable $u:X\times S^1\to\C$ given by
$u(x,\varphi)=e^{i\varphi}v(x)$.  
This leads to the induced observable
$U_\omega:Y\times S^1\to\C$ given by $U_\omega(q)=\sum_{\ell=0}^{r(q)-1}u(f_\omega^\ell q)$.
Note that
\[
U_\omega(y,\varphi)=\sum_{\ell=0}^{r(y)-1}u(f^\ell y,\varphi+\ell\omega)
=e^{i\varphi}\sum_{\ell=0}^{r(y)-1}e^{i\ell\omega}v(f^\ell y)
=e^{i\varphi}V_\omega(y),
\]
and similarly that
\[
\sum_{j=0}^{n-1}U_\omega\circ F_\omega^j(y,\varphi)=e^{i\varphi}\sum_{j=0}^{n-1}e^{i\omega r_j(y)}V_\omega\circ F^j(y).
\]

\begin{prop}   \label{prop-wip}
	Let $\omega$ be an irrational angle.
Suppose that $r\in L^{1+}(Y)$ and that $V^*_\omega\in L^2(Y)$.  Then
\[
n^{-1}\Bigl|\sum_{j=0}^{n-1}u\circ f_\omega^j|^2\to_d Z_\omega \quad\text{on $(X\times S^1,\nu)$},
\]
where $Z_\omega$ is a random variable with $\E Z_\omega=S^Y(\omega)/\bar r$.
\end{prop}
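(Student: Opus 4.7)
The plan is to reduce the statement to a martingale central limit theorem on the induced system $(Y\times S^1, F_\omega, \nu_Y)$ via the kernel observable $\tilde V_\omega$ constructed in the proof of Lemma~\ref{lem-induced}(b), and then transfer the distributional limit to $(X\times S^1,\nu)$ using the return time decomposition.

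For $x\in Y$ set $N_n(x)=\max\{k: r_k(x)\le n\}$. Kac's formula and Birkhoff give $N_n/n\to 1/\bar r$ a.s. Breaking the orbit into return blocks yields
\[
\sum_{j=0}^{n-1}u\circ f_\omega^j = \sum_{k=0}^{N_n-1}U_\omega\circ F_\omega^k + R_n,
\]
with $|R_n|\le V^*_\omega\circ F^{N_n}$. Since $V^*_\omega\in L^2(Y)$ and $F$ preserves $\mu_Y$, Chebyshev yields $n^{-1/2}R_n\to 0$ in probability. Points $x\notin Y$ are handled by starting the decomposition at the a.s.\ finite first hitting time of $Y$, contributing an extra $O(1)$ error.

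Now recall that $\chi_\omega=\sum_{j\ge 1}P_\omega^j V_\omega$ is absolutely summable in $F_\theta(Y)$ (Propositions~\ref{prop-P} and~\ref{prop-PV}) and $\tilde V_\omega=V_\omega+\chi_\omega-e^{i\omega r}\chi_\omega\circ F$ satisfies $P_\omega\tilde V_\omega=0$ with $|\tilde V_\omega|_2^2=S^Y(\omega)$. Lifting to the skew product, $\tilde U_\omega(y,\varphi)=e^{i\varphi}\tilde V_\omega(y)$ lies in the kernel of the $L^2$-transfer operator of $F_\omega$, so $\{\tilde U_\omega\circ F_\omega^k\}$ is a stationary ergodic reverse-martingale difference sequence on $(Y\times S^1,\nu_Y)$. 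Telescoping yields
\[
\sum_{k=0}^{N-1}U_\omega\circ F_\omega^k=\sum_{k=0}^{N-1}\tilde U_\omega\circ F_\omega^k+O_{L^2}(1),
\]
and the stationary martingale CLT (Billingsley) gives
\[
N^{-1/2}\sum_{k=0}^{N-1}\tilde U_\omega\circ F_\omega^k\to_d G,
\]
for a centered complex random variable $G$ with $\E|G|^2=|\tilde V_\omega|_2^2=S^Y(\omega)$. Combining with $N_n/n\to 1/\bar r$ via Slutsky,
\[
n^{-1}\Bigl|\sum_{j=0}^{n-1}u\circ f_\omega^j\Bigr|^2\to_d |G|^2/\bar r =: Z_\omega,
\]
whence $\E Z_\omega=S^Y(\omega)/\bar r$ as claimed.

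The main obstacle I anticipate is verifying the hypotheses of the martingale CLT for $\tilde U_\omega$ on the skew product: in particular, ergodicity of $F_\omega$ at irrational angles $\omega\in(0,2\pi)\setminus\pi\Q$ requires an argument akin to Proposition~\ref{prop-P} (combining mixing of $F$ with the irrationality of $\omega/(2\pi)$ to exclude eigenfunctions on the unit circle for $\hat P$). A secondary technical point is passing from convergence in distribution on $(Y\times S^1,\nu_Y)$ to $(X\times S^1,\nu)$, which relies on ergodicity of $f$ to ensure that a.e.\ $x\in X$ enters $Y$ in finite time and that the initial transient contributes negligibly after normalisation.
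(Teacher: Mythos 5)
Your plan is, in essence, an attempt to reprove by hand the inducing theorem that the paper simply cites: the paper's proof applies Theorem~1.10 of~\cite{GMsub} to the circle extension $f_\omega$ (which is ergodic for trivial reasons, since $f_\omega=f\times R_\omega$ is the product of a mixing map and an irrational rotation), obtaining a weak invariance principle on $(X\times S^1,\nu)$ with covariance $\hat\Sigma_\omega/\bar r$, and then applies the continuous mapping theorem with $g\mapsto|g(1)|^2$ and identifies $\operatorname{trace}\hat\Sigma_\omega=S^Y(\omega)$. Reproving the inducing step directly is legitimate in principle, but as written your argument has several genuine gaps, precisely at the points where the hypotheses $r\in L^{1+}(Y)$ and $V^*_\omega\in L^2(Y)$ have to do real work.

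Concretely: (i) ``Slutsky'' does not handle the random index $N_n$; passing from the CLT along deterministic times to the sums up to $N_n$ requires an Anscombe-type argument or, as in the cited proof, the \emph{functional} CLT for the induced martingale combined with the a.s.\ time change $N_{[nt]}/n\to t/\bar r$ and continuity of composition. (ii) Your martingale CLT is proved with respect to $\nu_Y$ on $Y\times S^1$, but the proposition is stated on $(X\times S^1,\nu)$. Starting the block decomposition at the first entry to $Y$ only removes a negligible initial segment; it does not address the fact that the post-entry point is distributed according to a different (size-biased/pushed-forward) measure than $\mu_Y$, and convergence in distribution is \emph{not} automatically preserved under a change of initial law --- one needs strong distributional convergence (Eagleson/Zweim\"uller) or to run the whole argument on the Young tower $\Delta$ with $\nu_\Delta$ and project, which is how the paper's Proposition~\ref{prop-moment}(b) and~\cite{GMsub} proceed. (iii) The same size-biasing defeats your Chebyshev bound for $R_n$: under $\nu$ the excursion containing time $n$ is length-biased, so $V^*_\omega\circ F^{N_n}$ is not distributed as $V^*_\omega$ under $\mu_Y$; the correct route is to bound $|R_n|$ by $\max_{0\le k\le n}V^*_\omega\circ F^k$ (or by the function $H_\omega$ on the tower, as in~\eqref{eq-H}) and use stationarity together with $V^*_\omega\in L^2$. (iv) Ergodicity of the induced skew product $F_\omega$, which you correctly flag as needed for a deterministic variance in the martingale CLT, is left unproven; it requires excluding measurable eigenfunctions for the twisted operators $P_{k\omega\bmod 2\pi}$ for every $k\neq0$, using irrationality of $\omega/\pi$ together with the gcd condition --- an argument avoided entirely by the paper, which only needs ergodicity of the ambient product $f_\omega$. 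Until these four points are supplied, the proposal does not constitute a proof.
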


\begin{proof}
Since $\omega\in [0,2\pi]\setminus\pi\Q$, the circle extension $f_\omega:X\times S^1\to X\times S^1$ is ergodic, and we are in a position to apply~\cite[Theorem~1.10]{GMsub}.  Note that $G,\phi,V,V^*,f_h$ in~\cite{GMsub} correspond to
$S^1,u,V_\omega,V^*_\omega,f_\omega$ here.  

By~\cite[Theorem~1.10]{GMsub}, we obtain a functional central limit theorem (weak invariance principle) as follows.
Define 
$W_{n,\omega}(t)
=n^{-1/2}\sum_{j=0}^{[nt]-1}u\circ f_\omega^j$ for $t=0,1/n,\dots,1$ and linearly interpolate to obtain $W_{n,\omega}\in C([0,1],\R^2)$.
Then  $W_{n,\omega}\to_w W_\omega$ in $C([0,1],\R^2)$ 
on $(X\times S^1,\nu),$ where
$W_\omega$ is a two-dimensional Brownian motion with some covariance matrix $\Sigma_\omega$.
Moreover, it follows from the proof of~\cite[Theorem~1.10]{GMsub}
(see the statements of~\cite[Theorems~2.1 and~3.3]{GMsub})
that $\Sigma_\omega=\hat\Sigma_\omega/\bar r$ where
\[
\hat\Sigma_\omega =
\lim_{n\to\infty}n^{-1}\int_{Y\times S^1}
\Bigl(\sum_{j=0}^{n-1}U_\omega\circ F_\omega^j\Bigr)
\otimes \Bigl(\sum_{j=0}^{n-1}U_\omega\circ F_\omega^j\Bigr)
\,d\nu_Y.
\footnote{For $a,b\in\C\cong\R^2$, we define
$a\otimes b=ab^T=\left(\begin{array}{cc} \Re a\Re b & \Re a\Im b \\ \Im a\Re b & \Im a \Im b \end{array}\right)$.}
\]

Consider the functional $\chi:C([0,1],\R^2)\to\R$, $\chi(g)=|g(1)|^2$.
By the continuous mapping theorem,
$\chi(W_{n,\omega})\to_d \chi(W_\omega)$, so 
\[
n^{-1}|\sum_{j=0}^{n-1}u\circ f_\omega^j|^2\to_d Z_\omega,
\quad\text{where $Z_\omega=|W_\omega(1)|^2$}.
\]
In particular, 
\begin{align*}
\E Z_\omega=\E|W_\omega(1)|^2=\Sigma_\omega^{11}+\Sigma_\omega^{22}
=(\hat\Sigma_\omega^{11}+\hat\Sigma_\omega^{22})/\bar r.
\end{align*}

Finally,
\begin{align*}
S^Y(\omega) & =\lim_{n\to\infty}n^{-1}\int_Y|\sum_{j=0}^{n-1}e^{i\omega r_j}V_\omega\circ F^j|^2\,d\mu_Y \\ & =
\lim_{n\to\infty}n^{-1}\int_{Y\times S^1}|\sum_{j=0}^{n-1}U_\omega\circ F_\omega^j|^2\,d\nu_Y
=\hat\Sigma_\omega^{11}+\hat\Sigma_\omega^{22},
\end{align*}
completing the proof.
\end{proof}

\begin{prop}  \label{prop-moment}
	Suppose that either
\begin{itemize}
	\item[(a)] $r\in L^{2+}(Y)$ and choose $p>2$ such that $r\in L^{\frac{p}{2}+1+}(Y)$, or
	\item[(b)] $r\in L^a(Y)$ and 
$V^*_\omega\in L^{bp}(Y)$ for all $\omega\in (0,2\pi)$, where
$a\in(1,\infty]$, $1/a+1/b=1$, and $p>2$.
\end{itemize}
Then there is a constant $C\ge1$ such that 
$|\sum_{j=0}^{n-1}u\circ f_\omega^j|_p\le Cn^{1/2}$.
\end{prop}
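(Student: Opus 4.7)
We bound $|S_n u|_p = |\sum_{j=0}^{n-1} u \circ f_\omega^j|_{L^p(\nu)}$ by $O(n^{1/2})$ by passing to the induced system: estimate the corresponding moments for $F_\omega$ on $(Y \times S^1, \nu_Y)$ applied to $U_\omega$, then transfer back to $X \times S^1$ via the Young tower.

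For $(y, \varphi) \in Y \times S^1$, the inducing identity gives
$$\sum_{j=0}^{n-1} u \circ f_\omega^j(y, \varphi) = \sum_{k=0}^{N-1} U_\omega \circ F_\omega^k(y, \varphi) + R_n(y, \varphi),$$
where $N = N(n,y) = \max\{k : r_k(y) \le n\}$ and $|R_n(y,\varphi)| \le V^*_\omega(F^N y)$. To control the main sum, I lift the martingale--coboundary decomposition from the proof of Lemma~\ref{lem-induced}(b) to $Y \times S^1$: setting $\psi_\omega(y,\varphi) = e^{i\varphi}\chi_\omega(y)$ and $M_\omega(y,\varphi) = e^{i\varphi}\tilde V_\omega(y)$, one has $U_\omega = M_\omega + \psi_\omega - \psi_\omega \circ F_\omega$, with $\psi_\omega$ bounded (as $\chi_\omega \in F_\theta(Y)$) and $M_\omega$ annihilated by the transfer operator of $F_\omega$ (equivalent to $\tilde V_\omega \in \ker P_\omega$). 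Thus $\sum_{k=0}^{N-1} U_\omega \circ F_\omega^k$ equals the reverse-martingale sum $\sum_{k=0}^{N-1} M_\omega \circ F_\omega^k$ up to a telescoping term bounded by $2|\psi_\omega|_\infty$.

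In case (b), $|V_\omega| \le V^*_\omega \in L^{bp}(Y)$ gives $|M_\omega|_{bp} < \infty$, so Burkholder's inequality for the reverse martingale yields $|\sum_{k=0}^{N-1} M_\omega \circ F_\omega^k|_{L^{bp}(\nu_Y)} \ll N^{1/2}$. The tower identity $\int_X \phi\,d\mu = |r|_1^{-1}\int_Y \sum_{\ell=0}^{r(y)-1} \phi(f^\ell y)\,d\mu_Y$ turns the $L^p(\nu)$-norm of $S_n u$ on $X\times S^1$ into an $r(y)$-weighted integral on $Y\times S^1$; H\"older with conjugate exponents $(a,b)$ absorbs the weight against $r \in L^a(Y)$ and produces the $n^{1/2}$ bound, with the boundary term $R_n$ and the initial pre-return segment (for starting points in $X\setminus Y$) controlled similarly via $V^*_\omega \in L^{bp}$. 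In case (a), the estimate $|M_\omega| \le |v|_\infty r + O(1)$ together with $r \in L^{p/2+1+}(Y)$ gives $|M_\omega|_{p/2+1+} < \infty$; since $p/2+1 < p$ when $p>2$, a direct Burkholder step is insufficient, and one must invoke a Rosenthal-type moment inequality that interpolates between $|M_\omega|_2$ (controlling the predictable quadratic variation, bounded via Proposition~\ref{prop-PV} and the spectral gap of $P_\omega$ from Proposition~\ref{prop-P}) and $|M_\omega|_{p/2+1+}$ (controlling the $p$-th moment via the tail of $r$), as in~\cite{GMsub}.

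The main obstacle is case (a): $U_\omega$ typically fails to lie in $L^p(Y)$, so the standard Burkholder route is unavailable, and one must invoke the sharper Rosenthal-type inequality. The threshold $r \in L^{p/2+1+}(Y)$ is precisely what that inequality demands in order to convert an $N^{1/2}$ variance estimate into an $L^p$ bound of the same order; the tower transfer from $Y\times S^1$ back to $X\times S^1$ then costs only a factor depending on $|r|_1$, not degrading the rate.
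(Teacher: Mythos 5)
Your case~(b) argument is essentially the paper's: induce to $Y\times S^1$, use the lap number, write $U_\omega$ as a martingale part plus a coboundary with $\chi_\omega$ bounded (Propositions~\ref{prop-P} and~\ref{prop-PV}), apply Burkholder at exponent $bp$, and absorb the return-time weight by H\"older with exponents $(a,b)$. Two details need tightening: the bound ``$\ll N^{1/2}$'' is not meaningful since $N$ is random --- you must dominate by $\max_{j\le n}\bigl|\sum_{k<j}U_\omega\circ F_\omega^k\bigr|$ (using $N\le n$) and apply the maximal form of Burkholder's inequality --- and the boundary/pre-return terms are cleanest on the tower $\Delta$, where invariance of $\nu_\Delta$ reduces them to the weighted estimate~\eqref{eq-H}. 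These are repairs, not new ideas.

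Case~(a) contains a genuine gap. Your claim that the transfer from $Y\times S^1$ back to $X\times S^1$ ``costs only a factor depending on $|r|_1$'' is false: for a function $G$ on $Y\times S^1$ that is constant along tower fibres (such as $\max_{j\le n}\bigl|\sum_{k<j}U_\omega\circ F_\omega^k\bigr|$), the transfer produces the weighted integral $\bar r^{-1}\int_Y r\,G^p\,d\mu_Y$, and since $r$ is unbounded this forces a H\"older step requiring $\|G\|_{L^{bp}}\ll n^{1/2}$ with $1/a+1/b=1$, $r\in L^a$. Under hypothesis~(a) the best choice is $a=\frac p2+1+$, so $bp$ is essentially $p+2$, while $V^*_\omega\le|v|_\infty r$ only gives $U_\omega,M_\omega\in L^{\frac p2+1+}(Y)$. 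No Rosenthal-type inequality can bound the $L^q$ norm of a martingale sum in terms of $\|M_\omega\|_2$ and $\|M_\omega\|_s$ with $s<q$ (already $n=1$ fails), so the interpolation you invoke cannot deliver the $L^{bp}$ bound that the weighted transfer demands; conversely, if you settle for the exponent $s=\frac p2+1$ that such an inequality could at best give, the H\"older transfer yields only $|\sum_{j<n}u\circ f_\omega^j|_{L^{p/2}}\ll n^{1/2}$, far short of the required $L^p$ bound. This obstruction is precisely why the paper abandons inducing in case~(a) and argues directly on $X\times S^1$: $r\in L^{\frac p2+1+}(Y)$ gives $\mu(r>n)=O(n^{-(\beta+1)})$ with $\beta>p/2$; by \cite{BHM05} the circle extension $f_\omega$ applied to the bounded observable $u$ has decay $|L_\omega^nu|_1=O(n^{-\beta})$; interpolation with $|L_\omega^nu|_\infty=O(1)$ gives $\chi_\omega=\sum_nL_\omega^nu\in L^{p/2}(X\times S^1)$; and the martingale--coboundary argument of \cite{MN08} (their equation~(3.1)) then yields $\bigl|\sum_{j<n}u\circ f_\omega^j\bigr|_p\ll n^{1/2}$. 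You would need to supply this (or an equivalent) argument; as written, your proposal does not prove case~(a).
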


\begin{proof}
	The method in both cases is to obtain a martingale coboundary decomposition~\cite{Gordin69} and then to apply Burkholder's inequality~\cite{Burkholder73} to the martingale part.   In case~(a), the decomposition is done on $X\times S^1$ following~\cite{MN08}.  In case~(b), we pass to a tower extension and reduce to the induced system on $Y\times S^1$.

	Case (a): 
	By Markov's inequality, the assumption $r\in L^{\frac{p}{2}+1+}(Y)$ guarantees that $\mu(r>n)=O(n^{-(\beta+1)})$ for some $\beta>p/2$.
Let $L:L^1(X)\to L^1(X)$ and $L_\omega:L^1(X\times S^1)\to L^1(X\times S^1)$
be the transfer operators corresponding to $f$ and  $f_\omega$ respectively.
Regard $u$ as fixed, and
let $u'\in L^\infty(X\times S^1)$ be of the form $u'(y,\varphi)=e^{i\varphi}v'(x)$ where $v'\in L^\infty(X)$.
By~\cite{BHM05},
$\int_{X\times S^1} L_\omega^n u\,\bar u'\,d\nu=
\int_{X\times S^1} u\,\bar u'\circ f_\omega^n\,d\nu=O(n^{-\beta}|u'|_\infty)$.
Equivalently $\int_X L^nv \,\bar v'\,d\mu=O(n^{-\beta}|v'|_\infty)$.
By duality
$\int_{X\times S^1}|L_\omega^nu| \,d\nu=
\int_X|L^nv| \,d\mu=O(n^{-\beta})$.

Now we proceed as in the proof of~\cite[Theorem~3.1]{MN08}.
Since
$|L_\omega^nu|_1 =O(n^{-\beta})$ and $|L_\omega^nu|_\infty =O(1)$,
it follows by interpolation that
$|L_\omega^nu|_q$ is summable for $q<\beta$, in particular for $q=p/2$.
Hence 
$\chi_\omega=\sum_{n=1}^\infty L_\omega^nu\in L^{p/2}(X\times S^1)$
and we obtain $u=\tilde u_\omega+\chi_\omega\circ f_\omega-\chi_\omega$
where $\tilde u_\omega\in\ker L_\omega$.
Continuing as in~\cite{MN08} (in particular, see~\cite[equation~(3.1)]{MN08}) we obtain the desired result.

\vspace{1ex} \noindent  Case (b):
Define the Young tower~\cite{Young99}
\begin{align*} 
	\Delta=\{(y,\varphi,\ell)\in Y\times S^1\times\Z:0\le\ell<r(y)\},
\end{align*}
and the tower map $\hat f_\omega:\Delta\to\Delta$,
\[
	\hat f_\omega(y,\varphi,\ell)=\begin{cases} (y,\varphi+\omega,\ell+1), & \ell\le r(y)-2
\\ (Fy,\varphi+\omega,0), & \ell=r(y)-1 \end{cases}
\]
with invariant probability measure
$\nu_\Delta=(\nu\times{\rm counting})/\bar r$.  

The projection $\pi:\Delta\to X\times S^1$ given by $\pi(y,\varphi,\ell)=(f^\ell y,\varphi)$ is a measure-preserving semiconjugacy between $\hat f_\omega$ and $f_\omega$, with
$\nu=\pi_*\nu_\Delta$.

Let $\hat u=u\circ \pi$.
Then $\int_{X\times S^1} |\sum_{j=0}^{n-1}u\circ f_\omega^j|^p\,d\nu=\int_{\Delta} |\sum_{j=0}^{n-1}\hat u\circ\hat f_\omega^j|^p\,d\nu_\Delta$.

Next, let $N_n:\Delta\to\{0,1,\dots,n\}$ be the number of laps by time $n$,
\[
	N_n(y,\varphi,\ell)=\#\{j\in\{1,\dots,n\}:\hat f_\omega^j(y,\varphi,\ell)\in Y\times S^1\times\{0\}\}.
\]
Then
\[
	e^{i\ell\omega}\sum_{j=0}^{n-1}\hat u\circ \hat f_\omega^j(y,\varphi,\ell)=
	\sum_{k=0}^{N_n(y,\varphi,\ell)-1}U_\omega\circ F_\omega^k(y,\varphi)+H_\omega\circ \hat f_\omega^n(y,\varphi,\ell)-H_\omega(y,\varphi,\ell)
\]
where $H_\omega(y,\varphi,\ell)=e^{i\varphi}\sum_{\ell'=0}^{\ell-1} e^{i\ell'\omega}v(f^{\ell'}y)$.
Note that $|H_\omega(y,\varphi,\ell)|\le V^*_\omega(y)$.

Now
\begin{align} \label{eq-H} \nonumber
	\int_\Delta |H_\omega\circ \hat f_\omega^n|^p\,d\mu_\Delta & =
	\int_\Delta |H_\omega|^p\,d\mu_\Delta=(1/\bar r)\int_Y\sum_{\ell=0}^{r(y)-1}|H_\omega(y,\varphi,\ell)|^p\,d\nu_Y(y,\varphi)
\\ & \le \int_Y r \ |V^*_\omega|^p\,d\mu_Y
\le |r|_a |{V^*_\omega}^p|_b = |r|_a |V^*_\omega|_{bp}^p.
\end{align}

Next, by H\"older's inequality,
 \begin{align*}
	 \int_{\Delta} & 
\Bigl|\sum_{k=0}^{N_n-1}U_\omega\circ F_\omega^k\Bigr|^p\,d\nu_\Delta
 \le \int_{\Delta}
\max_{j\le n}\Bigl|\sum_{k=0}^{j-1}U_\omega\circ F_\omega^k\Bigr|^p\,d\nu_\Delta
\\
& =(1/\bar r) \int_{Y\times S^1}  r \
\max_{j\le n}\Bigl|\sum_{k=0}^{j-1}U_\omega\circ F_\omega^k\Bigr|^p\,d\nu_Y
 \le   
|r|_{L^a(Y)} \  \Biggl|\max_{j\le n}\Bigl|\sum_{k=0}^{j-1}U_\omega\circ F_\omega^k\Bigr|\Biggr|_{L^{bp}(Y\times S^1)}^p.
     \end{align*}

Let $Q_\omega:L^1(Y\times S^1)
\to L^1(Y\times S^1)$ denote the transfer operator corresponding
to $F_\omega:Y\times S^1\to Y\times S^1$.
For an observable $U:Y\times S^1\to\C$ of the form
$U(y,\varphi)=e^{i\varphi}V(y)$, we have
$(Q_\omega U)(y,\varphi)=e^{i\varphi}(P_\omega V)(y)$.

By Propositions~\ref{prop-P} and~\ref{prop-PV}(a),
\[
\chi_\omega=\sum_{n=1}^\infty Q_\omega^nU_\omega
=e^{i\varphi}\sum_{n=1}^\infty P_\omega^nV_\omega\in L^\infty(Y\times S^1).
\]
Hence we can write
     \[
	     U_\omega= m_\omega+\chi_\omega\circ F_\omega-\chi_\omega,
     \]
     where $m_\omega\in\ker Q_\omega$.  
     Note also by the hypothesis on $V^*_\omega$ that
     $U_\omega\in L^{bp}(Y\times S^1)$ and hence
     $m_\omega\in L^{bp}(Y\times S^1)$ where $bp>2$.
     By Burkholder's inequality~\cite{Burkholder73},
     \[
	     \Bigl|\max_{j\le n}\bigl|\sum_{k=0}^{j-1}m_\omega\circ F_\omega^k\bigr|\Bigr|_{L^{bp}(Y\times S^1)}\ll |m_\omega|_{L^{bp}(Y\times S^1)} \ n^{1/2},
     \]
     and so
     $\bigl|\max_{j\le n}|\sum_{k=0}^{j-1}U_\omega\circ F_\omega^k|\bigr|_{L^{bp}(Y\times S^1)}\ll n^{1/2}$.  Hence we have shown that
\begin{align} \label{eq-maxU}
	\Bigl|\sum_{k=0}^{N_n-1}U_\omega\circ F_\omega^k\Bigr|_{L^p(\Delta)}\le |r|_a^{1/p} \
\Biggl|\max_{j\le n}\Bigl|\sum_{k=0}^{j-1}U_\omega\circ F_\omega^k\Bigr|\Biggr|_{L^{bp}(Y\times S^1)}\ll n^{1/2}.
\end{align}

     By the triangle inequality, it follows from~\eqref{eq-H} and~\eqref{eq-maxU} that
\[
	\Bigl|\sum_{j=0}^{n-1}u\circ f_\omega^j\Bigr|_{L^p(X\times S^1)}=\Bigl|\sum_{j=0}^{n-1}\hat u\circ\hat f_\omega^j\Bigr|_{L^p(\Delta)}
	\ll n^{1/2},
\]
as required.
\end{proof}

\begin{pfof}{Lemma~\ref{lem-SYS}}
	The bound on moments in Proposition~\ref{prop-moment} together with
the distributional limit law in Proposition~\ref{prop-wip} implies convergence of lower moments,
(see for example~\cite{MTorok12}), 
and so 
\[
\lim_{n\to\infty}n^{-1}\Bigl|\sum_{j=0}^{n-1} u\circ f_\omega^j\Bigr|^2_{L^2(X\times S^1)}=
S^Y(\omega)/\bar r.
\]

Now $\sum_{j=0}^{n-1} u(f_\omega^j(x,\varphi))=e^{i\varphi}
\sum_{j=0}^{n-1} e^{ij\omega}v(f^j(x))$ and so
$|\sum_{j=0}^{n-1} u\circ f_\omega^j|_{L^2(X\times S^1)}=
|\sum_{j=0}^{n-1} e^{ij\omega}v\circ f^j|_{L^2(X)}$.
Hence
\[
S(\omega)=\lim_{n\to\infty}n^{-1}\Bigl|\sum_{j=0}^{n-1} e^{ij\omega}v\circ f^j\Bigr|^2_{L^2(X)}=	
S^Y(\omega)/\bar r,
\]
as required.
\end{pfof}

\subsection{Completion of the proof}
\label{sec-subproof}

First assume the hypotheses of Theorem~\ref{thm-main}(b).
By Lemma~\ref{lem-induced}, $S^Y$ exists and is continuous on $(0,2\pi)$,
and typically $S^Y$ is nonvanishing on $(0,2\pi)$.
By Lemma~\ref{lem-SYS}, $S$ coincides with $S^Y/\bar r$ almost everywhere on 
$(0,2\pi)$.
Theorem~\ref{thm-main}(b) follows immediately.

Under the hypotheses of Theorem~\ref{thm-main}(a) we have the same properties, but in addition
	$S(\omega)$ is continuous on $(0,2\pi)$ by the Wiener-Khintchine Theorem, so it follows that 
	$S(\omega)=S^Y(\omega)/\bar r$ for all $\omega\in(0,2\pi)$.
Hence $S$ is typically nonvanishing on $(0,2\pi)$.
Moreover, by Wiener-Khintchine, $S$ extends to a continuous function
$S_0(\omega)=\sum_{k=-\infty}^\infty e^{ik\omega}\rho(k)$ on $[0,2\pi]$.
By the Green-Kubo formula, $S_0(0)=S_0(2\pi)$ coincides with the variance
$\sigma^2=\lim_{n\to\infty}n^{-1}\int_X|\sum_{j=0}^{n-1}v_0\circ f^j|^2\,d\mu$
where $v_0=v-\int_X v\,d\mu$.
Typically $\sigma^2>0$, see for example~\cite[Remark~2.11]{MN05}.
Hence typically $S_0$ is bounded away from zero on $[0,2\pi]$ and so
$S$ is bounded away from zero on $(0,2\pi)$.
This completes the proof of Theorem~\ref{thm-main}(a).

\paragraph{Acknowledgements}
This research was supported in part by an International Research Collaboration Award at the University of Sydney. GAG acknowledges funding from the Australian Research Council. The research of IM was supported in part by a European Advanced Grant {\em StochExtHomog} (ERC AdG 320977).

\end{document}